\documentclass[preprint,nonatbib]{elsarticle}
\makeatletter
\let\c@author\relax
\makeatother
\usepackage[utf8]{inputenc}
\usepackage[USenglish,american]{babel} 
\usepackage{csquotes}

\usepackage{amsmath,amsthm}
\usepackage{amssymb}
\usepackage{mathtools}
\usepackage{tikz}
\usetikzlibrary{backgrounds}
\usetikzlibrary{shapes.geometric, arrows}
\usepackage{todonotes}
\usepackage{xspace}
\usepackage[shortlabels]{enumitem}
\usepackage[nolist]{acronym}
\usepackage{float} 
\usepackage{longtable} 
\usepackage{booktabs} 
\usepackage[table]{xcolor}  
\usepackage{colortbl}       
\usepackage{graphicx} 
\usepackage{adjustbox}
\usepackage{subcaption} 
\usepackage{placeins} 
\usepackage[ruled,englishkw,linesnumbered,nosemicolon]{algorithm2e}
\usepackage{soul} 

\usepackage[hidelinks]{hyperref}
\usepackage[capitalise]{cleveref}

\crefformat{footnote}{#2\footnotemark[#1]#3}


\usepackage[
    backend=biber,
    natbib=true,
    url=false, 
    doi=true,
    eprint=false
]{biblatex}
\addbibresource{references.bib}

\newtheorem{lemma}{Lemma}
\newtheorem{definition}{Definition}

\makeatletter
\AtBeginDocument
 {
   \def\ltx@label#1{\cref@label{#1}}
   \def\label@in@display@noarg#1{\cref@old@label@in@display{#1}}
\def\label@in@mmeasure@noarg#1{%
    \begingroup%
      \measuring@false%
      \cref@old@label@in@display{#1}
    \endgroup}%
 } %
\makeatother

\renewcommand{\P}{\mathcal{P}}  
\newcommand{\Ppool}{\ensuremath{\P_{\text{Pool}}}}

\newcommand{\R}{\mathcal{R}}    
\newcommand{\T}{\mathcal{T}}    
\newcommand{\nT}{T}             
\newcommand{\nf}{F}             
\newcommand{\nm}{M}             

\newcommand{\arr}{a}    
\newcommand{\dis}{d}    
\newcommand{\reg}{\nu}  

\newcommand{\los}{\mathrm{los}} 
\newcommand{\croom}{c_r} 
\newcommand{\ctotal}{c_{\R}} 
\newcommand{\loadf}{\ensuremath{\ell}}

\newcommand{\N}{\mathbb{N}}

\newcommand{\Ex}{\mathbb{E}}

\DeclarePairedDelimiter\ceil{\lceil}{\rceil}

\definecolor{dunkelblau2}{RGB}{0,83,159}
\definecolor{dunkelblau}{RGB}{0,113,187}
\definecolor{hellblau2}{RGB}{130,169,208}
\definecolor{hellblau}{RGB}{125,167,217}
\definecolor{gruen}{RGB}{179,210,53}
\definecolor{MyOrange}{RGB}{251,166,28}
\definecolor{MyDunkelgruen}{RGB}{0,102,105}
\definecolor{MyTuerkis}{RGB}{0,170,173}

\tikzset{
	box/.style = { draw = black, very thick, scale=2, minimum width=.75cm}, 
	box1/.style = { draw = black, very thick, scale=2, fill=black!10},
	box2/.style = { draw = black, very thick, scale=2, minimum width=.75cm, fill=black!10},
	greenball/.style = {circle, draw=black,  thick, fill = gruen, scale=.8},
	redball/.style = {circle, draw=black,thick, fill = red!80, scale=.8 },
	blacknode/.style = {circle, draw=black,  thick, fill=black!100, scale=0.35},
	blacknode2/.style = {circle, draw=black, very thick, fill=black!0, scale=0.7},
	darkgreennode/.style = {circle, draw=black, very thick, fill=dunkelgrun, scale=0.7},
	darkbluenode/.style = {circle, draw=black, very thick, fill=dunkelblau, scale=0.7},
	greennode/.style = {circle, draw=black, very thick, fill=grun, scale=0.7},
	rednode/.style = {circle, draw=black, very thick, fill=red!80, scale=0.7},
	bluenode/.style = {circle, draw=black, very thick, fill=hellblau!80, scale=0.7},
	orangenode/.style = {circle, draw=black, very thick, fill=orange, scale=0.7},
	cross/.style = {cross out, draw=black, thick, fill=black!10, scale=0.6},
	edge/.style = { thick},
	edge2/.style = {dashed,  very thick, draw=gray},
	grayball/.style = {draw=gray, fill=gray!20, thick, circle, scale=.8},
	orangeball/.style = {draw=black, fill=orange, thick, circle},
	square/.style = { draw=none, fill=dunkelblau!10},
	blacksquare/.style = { draw=black, very thick, fill=none}
}

\begin{document}
\begin{acronym}[ECU] 
    \acro{los}[LOS]{length of stay}
    \acro{lor}[LOR]{length of registration}
    \acro{ssp}[SSP]{subset sum problem}
    \acro{pra}[PRA]{patient-to-room assignment problem}
    \acro{bkp}[BSKP]{bounded simple knapsack problem}
    \acro{gui}[GUI]{graphical user interface}
\end{acronym}

\begin{frontmatter}
\title{
Instance Generation for Patient-to-room Assignment and Admission Scheduling
Based on Real Hospital Data
}
\author[acomb]{Tabea Brandt\corref{cor1}\fnref{fn1}}
\ead{brandt@combi.rwth-aachen.de}
\author[acomb]{Christina B\"using\corref{cor1}\fnref{fn1}} 
\ead{buesing@combi.rwth-aachen.de}
\author[acomb]{Johanna Leweke\corref{cor1}\fnref{fn1}} 
\ead{leweke@combi.rwth-aachen.de}
\author[aalg]{Finn Seesemann\corref{cor1}\fnref{fn1}}
\ead{seesemann@algo.rwth-aachen.de}
\author{Sina Weber} 

\address[acomb]{Chair of Combinatorial Optimization, RWTH Aachen University, \\
Im Süsterfeld 9, 52072 Aachen, Germany}
\address[aalg]{Chair of Algorithms and Complexity, RWTH Aachen University, \\
Ahornstraße 55, 52074 Aachen, Germany}
\fntext[1]{
This work was supported
by the German research council (DFG) Research Training Group 2236 UnRAVeL.
Declarations of interest: none
}
\cortext[cor1]{Corresponding author}

\journal{arXiv.org}
\begin{abstract}
Developing algorithms for real-life problems that perform well in practice depends on the availability of realistic data for testing.
Obtaining real-life data for optimization problems in health care, however, is often difficult, and such data typically cannot be published, which limits reproducibility by other researchers. This is especially true for patient-related problems because of data privacy policies such as the \acf{pra}.
Therefore, artificially generated instances are commonly used. To improve the generation of realistic instances, we develop a configurable instance generator for \ac{pra} and other patient-related problems, featuring an easy-to-use graphical user interface.
The design of the generator is based on an extensive empirical analysis of real hospital data, which identifies relevant ward-specific patterns such as patients’ age and length-of-stay distributions.
Moreover, as randomly generated instances are often infeasible, we address this issue in two ways. We implement a dynamic programming approach in the generator to optionally enforce feasibility and extend existing results from the literature to derive new combinatorial insights into \ac{pra} feasibility.

\end{abstract}
\begin{keyword}
combinatorial optimization \sep hospital bed management \sep bounded simple knapsack \sep benchmark instances \sep health care data 
\end{keyword}
\end{frontmatter}

\section{Introduction}\label{sec:introduction}
The development of well-performing algorithms for real-life optimization problems depends on the availability of realistic input data \cite{Lent2012}.
In the health care sector, however, obtaining real data for testing is often challenging due to privacy regulations, incomplete documentation, and limited data accessibility \cite{Harper2004, Brailsford2013}.
This challenge is especially pronounced for patient-related problems, such as the \acf{pra}, which crucially rely on detailed patient information \cite{Demeester2010, Brailsford2010}.
Even when real data can be obtained, it typically cannot be published due to confidentiality concerns, which complicates the reproducibility and comparability of computational studies.
Consequently, difficulties related to data collection, accessibility, and research ethics contribute to the perception that modeling in health care is comparatively difficult \cite{Tako2015}. 

To compensate for the lack of real data, artificial instances are frequently generated using small samples or expert estimates. However, such data often fails to adequately capture the complexity and heterogeneity of hospital environments and may therefore lead to unreliable conclusions \cite{TunnicliffeWilson1981}.
For example, expert estimates are often affected by systematic biases, as experts tend to overweight exceptional or high-impact situations \cite{Carter2023}.
For the \ac{pra}, we further identify significant differences between hospital wards, for example with respect to patients' age distributions and \acl{los}, making data derived from a single ward not transferable to other wards and thus other research projects. 


To address this gap, we propose a new instance generator for patient data with the \ac{pra} in mind. 
Our approach is based on an extensive data analysis on a large real-life hospital dataset.
In the analysis we identify systematic differences in patient attributes across wards and cluster the wards into distinct types for each relevant attribute.
Based on these clusters, we derive explicit probability distributions. The generator provides these distributions through an easy-to-use graphical user interface which is accessible in form of a web application. 
Users can configure key characteristics of the generated instances, including patient demographics (e.g., age and gender), admission characteristics (e.g., emergency rate), stay-related parameters (e.g., length of stay), and the ward layout (e.g., the number and capacities of rooms), to represent typical ward types or to mimic specific hospital settings.
A distinctive feature of the generator is the optional enforcement of feasibility with respect to gender-separated rooms.

The generated datasets correspond to the data required in the \ac{pra}, but can also serve as input for other patient-related planning problems in hospitals. In particular, the instances support capacity planning decisions, such as determining the required number of beds in a ward or assessing the impact of temporarily closing beds. 
Moreover, they can be used for patient admission management, including decisions on postponing or canceling elective patients under capacity constraints. Beyond these applications, the datasets can be extended to address additional problems, for example by incorporating patient workload for nurse-to-patient assignment problems.

The remainder of this paper is organized as follows:
\cref{sec:problem_def} formally introduces the \ac{pra} and the corresponding instance structure considered in this work.
\cref{sec:literature} provides an overview of existing instance sets and generators for the \ac{pra}.
\cref{sec:generator} analyzes the real-life hospital data underlying the instance generator and describes its input parameters, usage options, and the generation process.
\cref{sec:feas} addresses the feasibility of instances with respect to gender-separated rooms. 
In this context, we show how the dynamic programming algorithm for the knapsack problem can be applied to arbitrary instances of the feasibility problem and identify polynomially solvable special cases.
Finally, \cref{sec:future} concludes the paper and outlines directions for future research.



\section{Problem definition} \label{sec:problem_def}
The instance generator produces exactly the parameters required to define an instance of the \acf{pra}.
In the \ac{pra}, we are given a hospital ward with a set of rooms $\R$ with $\croom \in \N$ beds in room $r \in \R$, and a discrete planning horizon $\mathcal{T} = \{ 1, \dots, T \}$. The total bed capacity is denoted by $\ctotal = \sum_{r \in \R} \croom$.
Furthermore, we have a set of patients $\P$, where $\P(t) \subseteq \P$ denotes the subset of patients present in the ward on day $t \in \mathcal{T}$.
For each patient $p \in \P$ we know their age, sex, registration date $\reg(p) \in \mathcal{T}$, arrival date $\arr(p) \in \mathcal{T}$, discharge date $\dis(p) \in \mathcal{T}$, whether they are entitled to a single room, and whether they bring an accompanying person.
The \ac{los} is defined as $\los(p) = \dis(p) - \arr(p)$. Under the assumption that $1 \leq \arr(p) \leq \dis(p) \leq T$ holds for all patients, it follows that
\begin{align} \label{eq:los_to_patientCount}
    \sum_{p \in \P} \los(p) = \sum_{t \in \mathcal{T}} |\P(t)|.
\end{align}
Analogously, the \ac{lor} is defined as $\arr(p) - \reg(p)$.
The ward utilization on day $t \in \mathcal{T}$ is quantified by the daily load factor $\loadf_t = |\mathcal{P}(t)| / \ctotal$, and the overall load factor over the planning horizon is $\loadf = \left( \sum_{t \in \mathcal{T}} \loadf_t \right) / T$.

The task in the \ac{pra} is to assign every patient to a room for every time period of their stay.
We assume that patients stay in the hospital on consecutive time periods from arrival to discharge and that they are discharged at the beginning of a time period. Furthermore, we assume that every patient can be assigned to any room. An assignment of patients to rooms is feasible if, for every room and every time period, the following two conditions are fulfilled: (i) rooms capacities are respected, and (ii) female and male patients are assigned to separate rooms.
Under these assumptions, single-room entitlement is treated as a soft constraint; therefore, it does not affect the feasibility of the model.

Checking feasibility of an instance of the \ac{pra} is $\mathcal{NP}$-complete in general \cite{brandt_2024}, but some special cases can be solved in polynomial time, as we show in \cref{sec:feas}. We also present a pseudo-polynomial dynamic program for arbitrary room capacities.
We integrated the dynamic program into the instance generator to guarantee the construction of feasible instances even in case of a high load factor $\loadf \in [0,1]$.

\section{Literature}\label{sec:literature}
To our knowledge, there are currently four instance sets providing patient data suitable for \ac{pra}. One of them is provided without an accompanying instance generator, two include publicly available generators, and a fourth requires access to a large underlying dataset to generate additional instances.

The first benchmark instance set for \ac{pra} was proposed by Demeester et al.\ and consists of 13 instances available online\footnote{\url{https://people.cs.kuleuven.be/~tony.wauters/wim.vancroonenburg/pas/}} \cite{Demeester2010}.
The authors consider a centralized bed-management system for an entire hospital, where patients must be assigned to the most suitable room with respect to department specialty and room equipment.
The instances are based on a realistic hospital setting and informed by interviews with experienced admission schedulers.
Each instance models a two-week planning horizon in a hospital with six departments, each comprising 20--30 rooms with different equipment.
Patients' \ac{los} are generated according to a normal distribution with mean 5 and variance 3, and 60 new patients arrive per day.
No probability distributions are provided for additional patient attributes such as sex, age, or \ac{lor}.

Ceschia and Schaerf provide an instance set and generator for the integrated problem of \ac{pra} and operating-room scheduling\footnote{\url{https://bitbucket.org/satt/or-pas/src/master/}} \cite{Ceschia2016}.
The generator requires as input the number of rooms and beds, the number of patients, and the planning horizon.
Patient registrations per day are determined by a Poisson distribution with an expected value equal to the input average number of patients.
Admission dates are not fixed a priori but must be scheduled within a given time window: 10\% of patients are urgent and require immediate admission, 45\% must be admitted within one week, and the remaining 45\% within two weeks.
\ac{los} follow treatment-specific log-normal distribution.
Patients' gender is assigned randomly, age is drawn from a uniform distribution, and 30\% of patients have an overstay risk of one additional night.

Brandt et al.\ present an instance set and generator for the integrated problem of \ac{pra} and nurse-to-patient assignment\footnote{\url{https://doi.org/10.5281/zenodo.12750420}} \cite{Brandt2025}.
We summarize here only the \ac{pra}-relevant aspects.
The generator takes as input the number of instances, the planning horizon, the number of rooms by capacity, the target load factor, and available room equipment types.
The number of generated patients depends on the room configuration and load factor.
Rooms are assigned random subsets of room equipment, and patients are characterized by an admission shift, a \ac{los} uniformly drawn between one and five, a randomly assigned gender, and a subset of required equipment.
For integration with nurse-to-patient assignment, patient-specific workloads are specified for each day and shift.

In addition to these three \ac{pra}-specific generators, a large artificial data set is provided by the NHS\footnote{\url{https://digital.nhs.uk/services/artificial-data}}.
While this data can be used to construct \ac{pra} instances by defining appropriate wards and rooms, this decision is nontrivial.
The data are generated from real NHS data via statistical aggregated occurrences and thus realistically capture global pattern such as weekly admission fluctuation.
However, ward-specific characteristics and dependencies between patient attributes, such as age and \ac{los}, are not necessarily preserved. The generation code is publicly available, allowing users with access to extensive real data to produce anonymized data sets.




All reviewed instance generators rely on fixed probability distributions and generate patient attributes independently.
In contrast, our data analysis shows that key attributes such as the \ac{los} vary substantially across wards and strongly correlates with patient age.
Consequently, our instance generator provides ward-specific and age-dependent probability distributions for attributes such as the \ac{los}.
Moreover, the proposed generator offers the option to explicitly enforce feasibility with respect to gender-separated rooms, a requirement that is mandatory in several countries, including Germany.
Without such enforcement, instances with long planning horizons or high load factors are likely to be infeasible.
Alternatively, feasibility can be disabled, allowing the generation of instances with load factors exceeding one, thereby supporting admission control decisions.

Since the available real-world dataset does not contain information on room equipment, we do not model equipment explicitly.
Instead, all equipment is assumed to be rolling stock and therefore independent of rooms.
While the instances of Demeester et al.\ consider multiple departments, we could not identify suitable data describing alternative wards to which patients may be assigned.
Consequently, the current version of the generator does not support the generation of multi-ward assignment instances.

\section{Generator}\label{sec:generator}
In this section, we present a detailed description of our instance generator. 
We first describe the data used for the generator.
Second, we explain how to use the generator, and third, we describe the technical generation process.
The code for our instance generator is publicly available at \cite{unserCode}.

\subsection{Description and analysis of patient data from the hospital}
For our instance generator, we had access to one year of data from a hospital with more than 60 wards and over 50,000 patients. 
We excluded data from pediatric and psychiatric wards. Pediatric patients differ significantly in age and in the presence of accompanying persons, while psychiatric wards were omitted on the hospital's recommendation, as patient behavior there is not representative and lacks a discernible pattern.
Accordingly, the dataset contains only patients aged 18 years or older. The analytical results and the generated instances are not intended to be representative of pediatric or psychiatric wards.

For some patients, the data reports consecutive stays in multiple wards.
To obtain ward-specific data, we treat each stay per ward as an independent patient.
For each patient, we have the registration, admission, and discharge date, as well as age, gender, single-room entitlement, and the presence of an accompanying person.
When a patient’s stay is split into multiple ward-specific segments, each segment retains the original registration date, and the corresponding \ac{lor} values reflect this shared registration date.

Our data analysis reveals that patient age correlates with almost all other patient attributes.
To account for this, we compute ward-specific age probabilities for ages 18 to 100  and cluster the wards accordingly into four distinct types, cf. \cref{fig:rate_params:age}.
For each type, we derive an empirical age distribution based on the aggregated observations of all wards assigned to that type.
The overall age distribution across the full dataset can be reasonably approximated by a normal distribution  for the purpose of instance generation, with mean $\mu=61.559$ and standard deviation $\sigma=17.496$. These parameters were obtained using a standard fitting procedure\footnote{\url{https://docs.scipy.org/doc/scipy/reference/generated/scipy.stats.norm.html}}.
\begin{figure}[ht!]
    \centering
    \begin{subfigure}[b]{0.49\textwidth}
        \includegraphics[scale=0.35]{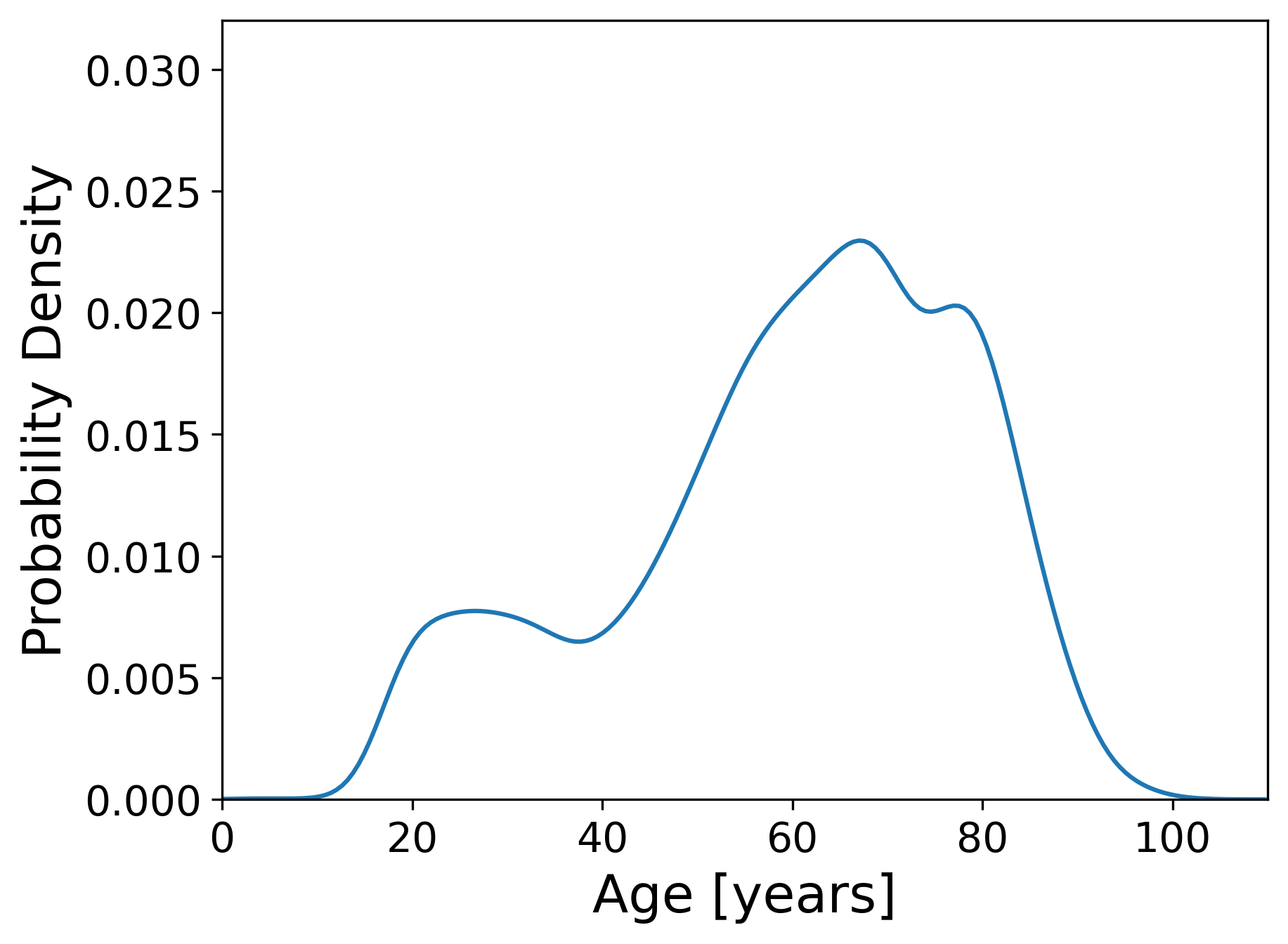}
        \caption{Type 1: 2-step peak}
        \label{fig:rate:age1}
    \end{subfigure}
    \begin{subfigure}[b]{0.49\textwidth}
        \includegraphics[scale=0.35]{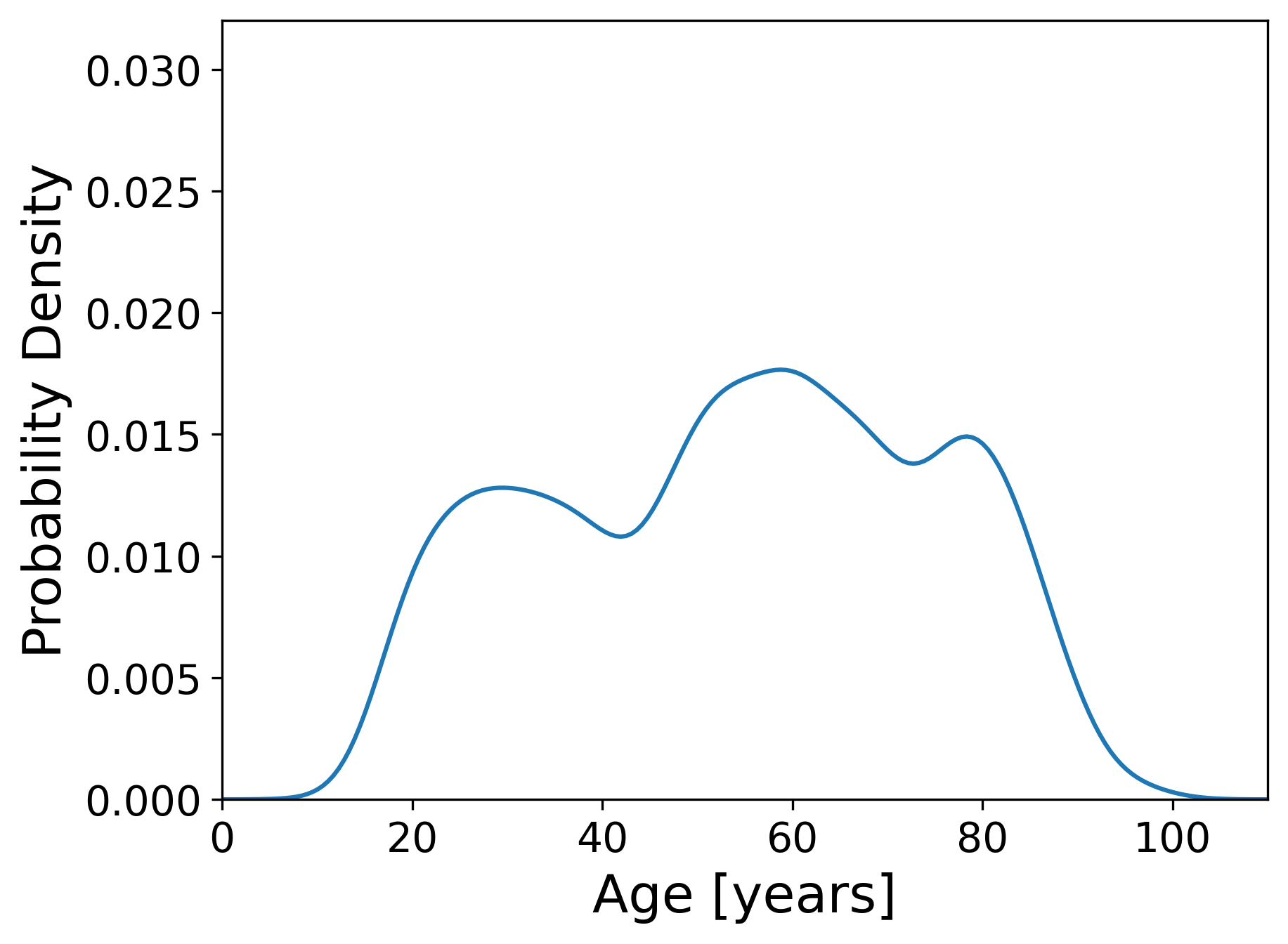}
        \caption{Type 2: 3-step peak}
        \label{fig:rate:age2}
    \end{subfigure}
    \begin{subfigure}[b]{0.49\textwidth}
        \includegraphics[scale=0.35]{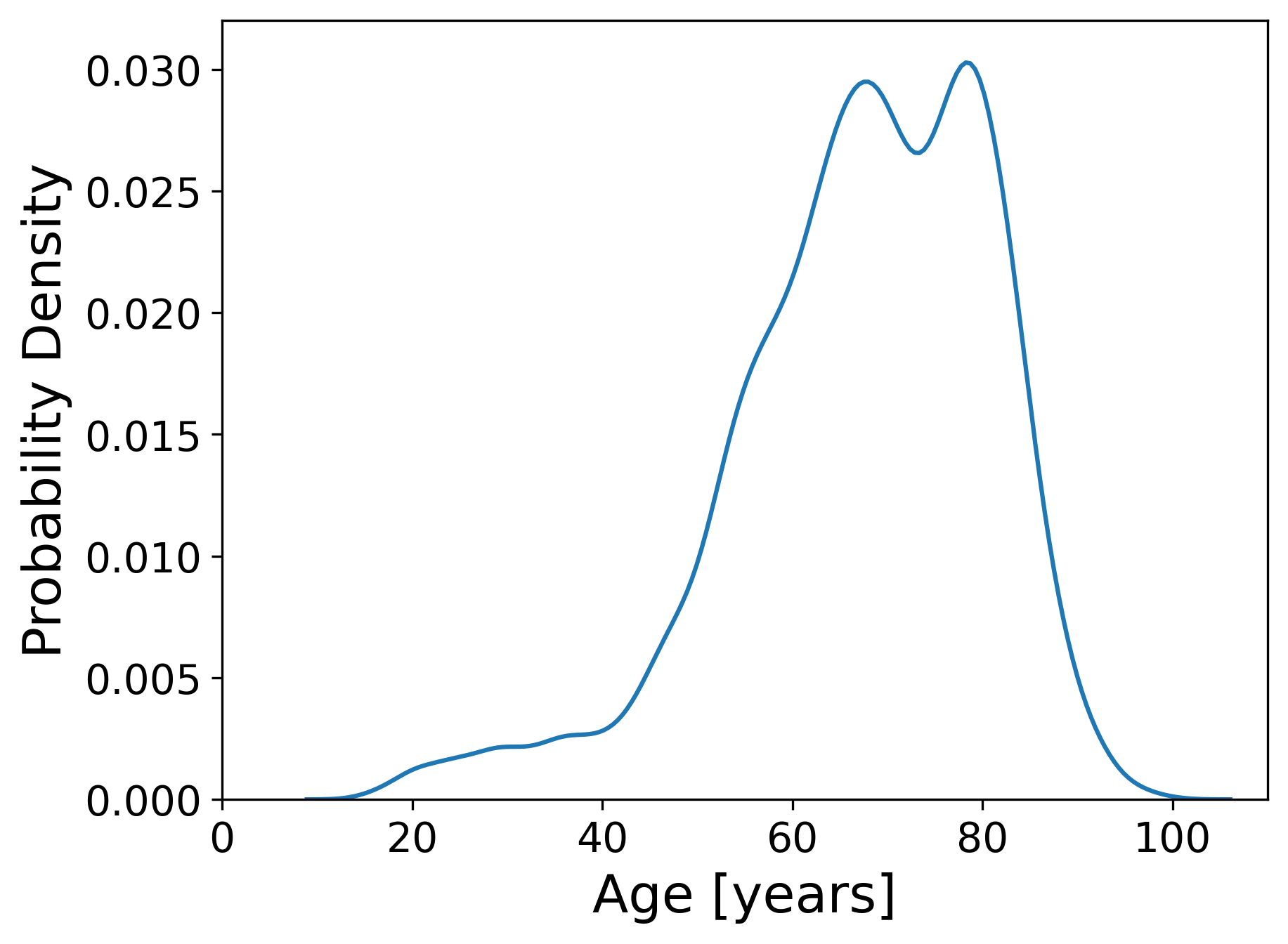}
        \caption{Type 3: high peak}
        \label{fig:rate:age3}
    \end{subfigure}
    \begin{subfigure}[b]{0.49\textwidth}
        \includegraphics[scale=0.35]{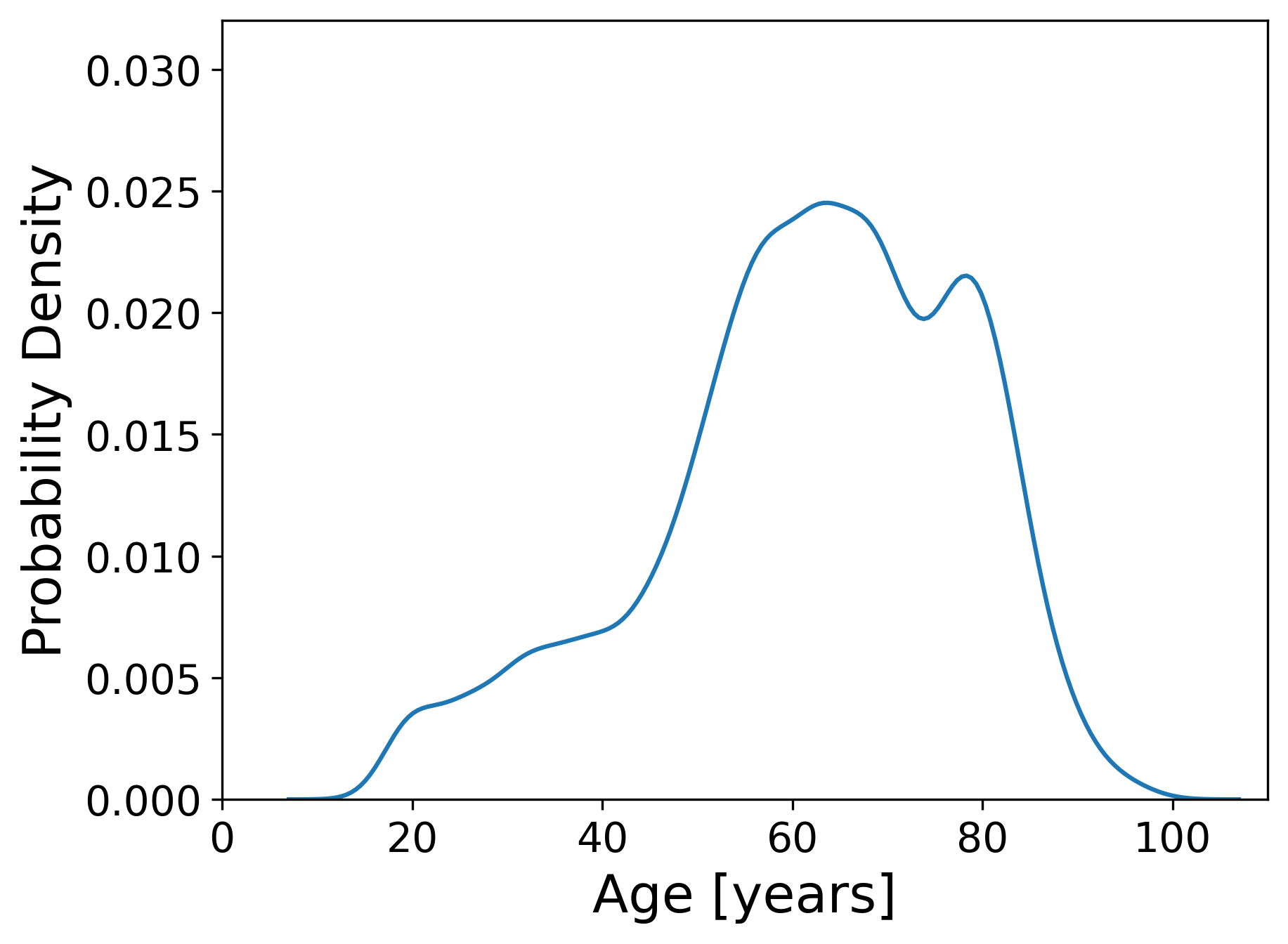}
        \caption{Type 4: wide peak}
        \label{fig:rate:age4}
    \end{subfigure}
    \caption{Four types of age distributions}
    \label{fig:rate_params:age}
\end{figure}

For each ward, we compute the probability distribution of a patient's \ac{los}.
Analogous to the age-based analysis, we cluster the wards according to their \ac{los} distributions (cf. \cref{fig:rate_params:los}), yielding five distinct distribution types. We also derive a global \ac{los} distribution based on the entire dataset.
Consistent with the literature, this global distribution is well approximated by a log-normal model \cite{Marazzi1998, Harper2002}, with mean $\mu=4.021$ and standard deviation $\sigma=1.246$, obtained via numerical fitting\footnote{\label{note:lognormal_fit}\url{https://docs.scipy.org/doc/scipy/reference/generated/scipy.stats.lognorm.html}}.
\begin{figure}[ht!]
    \centering
       \begin{subfigure}[b]{0.49\textwidth}
        \includegraphics[scale=0.35]{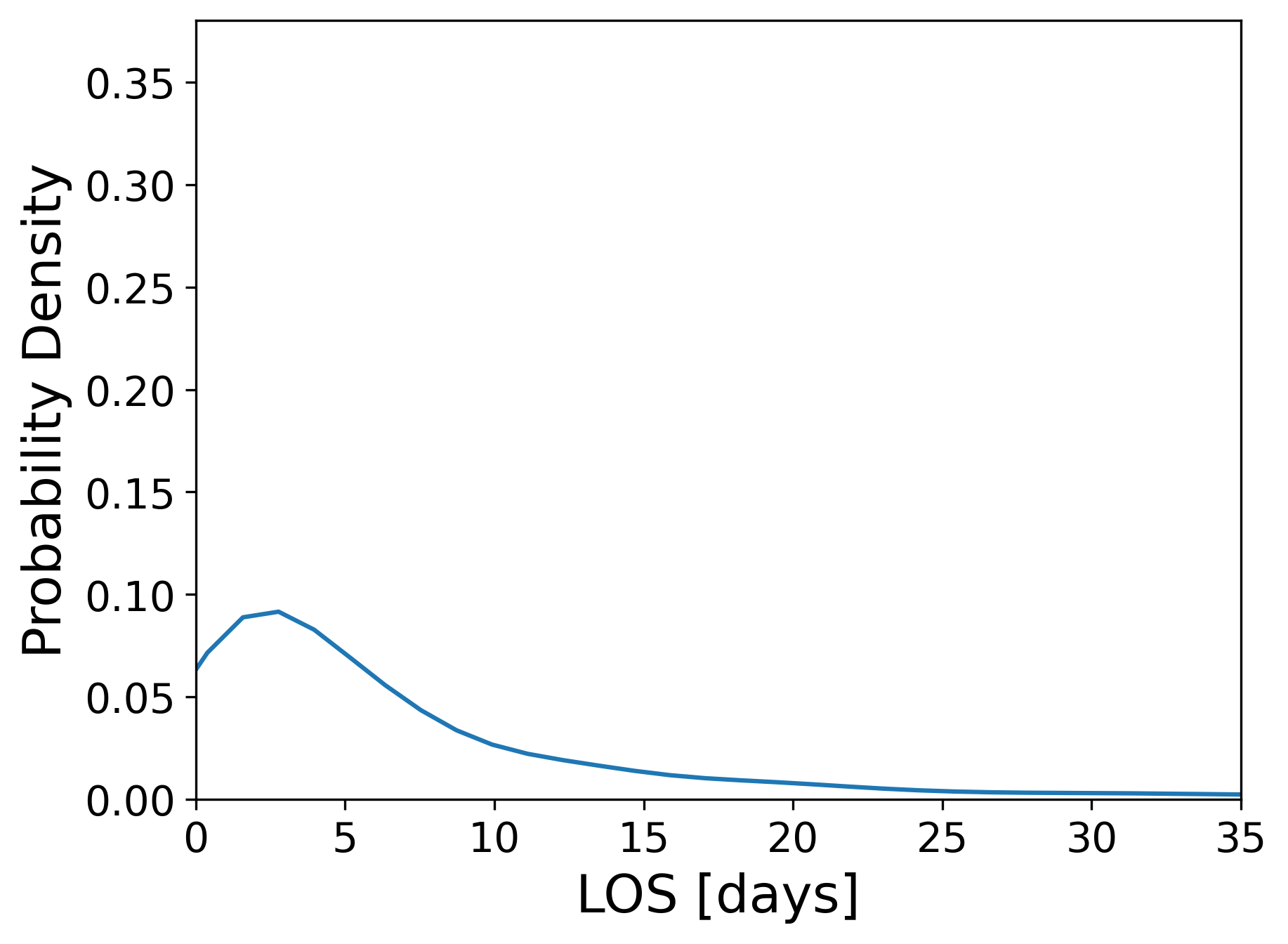}
        \caption{Type 1: flat curve}
        \label{fig:rate:los1}
    \end{subfigure}
    \begin{subfigure}[b]{0.49\textwidth}
        \includegraphics[scale=0.35]{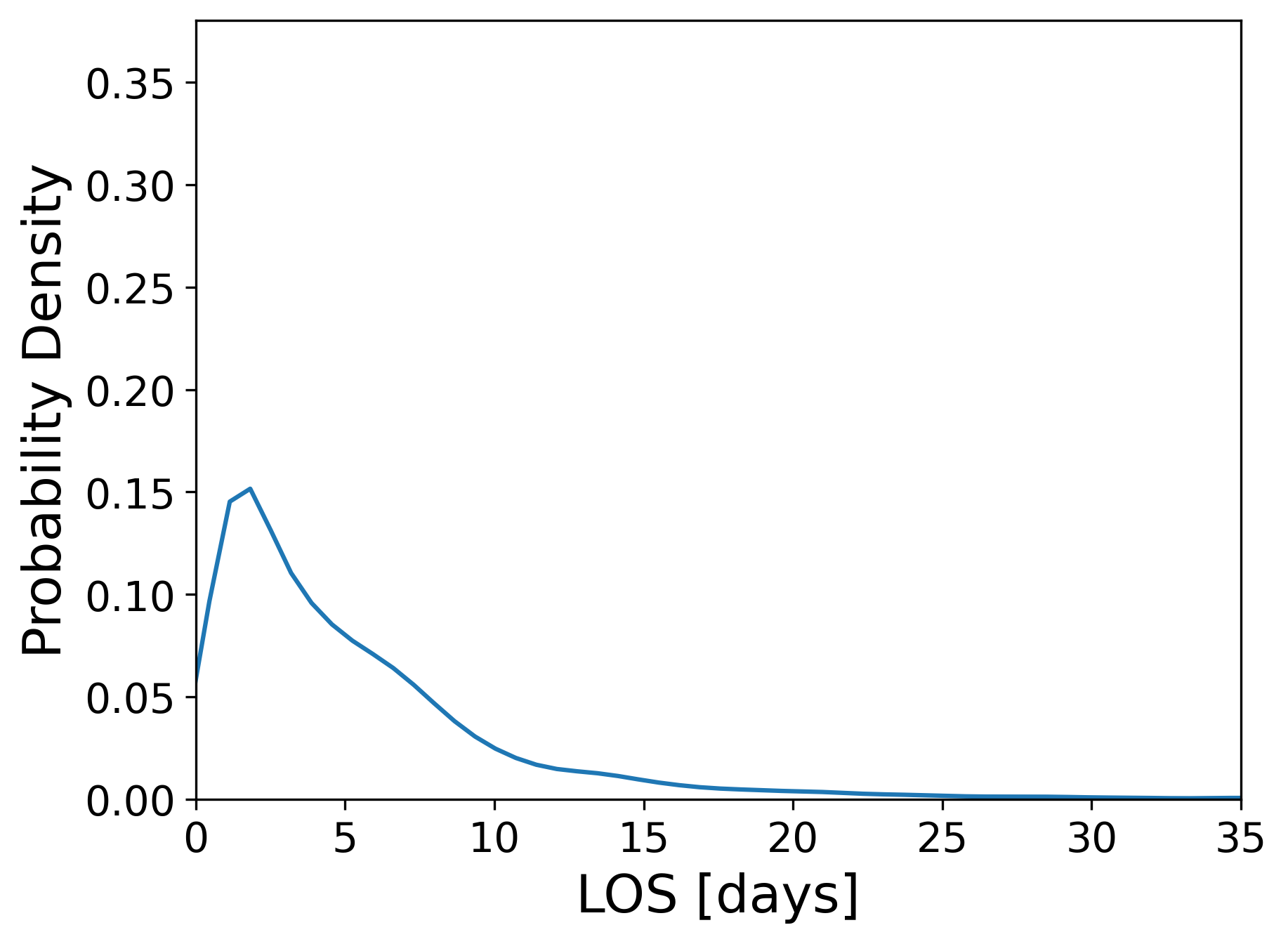}
        \caption{Type 2: middle curve}
        \label{fig:rate:los2}
    \end{subfigure}
       \begin{subfigure}[b]{0.49\textwidth}
        \includegraphics[scale=0.35]{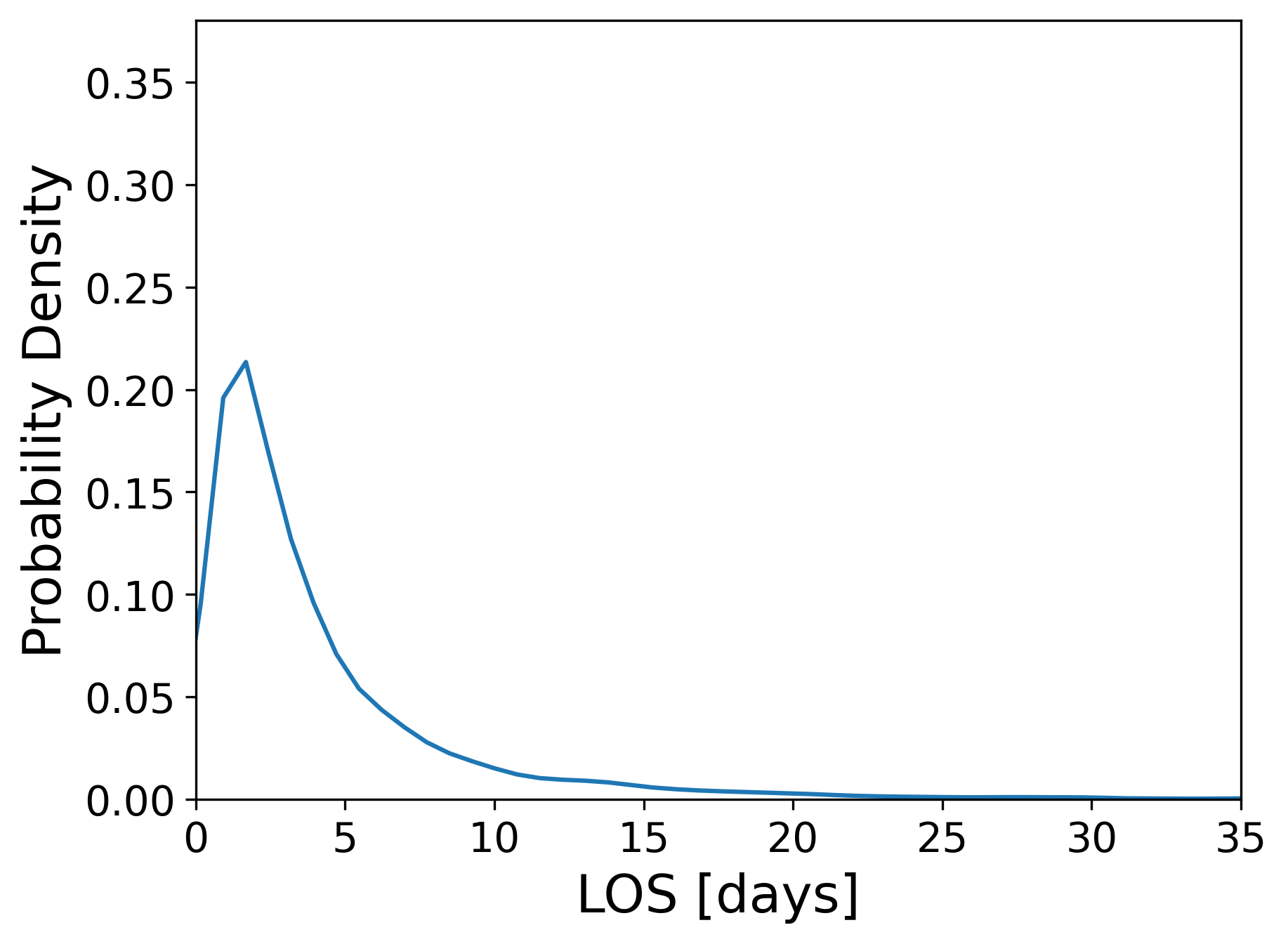}
        \caption{Type 3: higher curve}
        \label{fig:rate:los3}
    \end{subfigure}
    \begin{subfigure}[b]{0.49\textwidth}
        \includegraphics[scale=0.35]{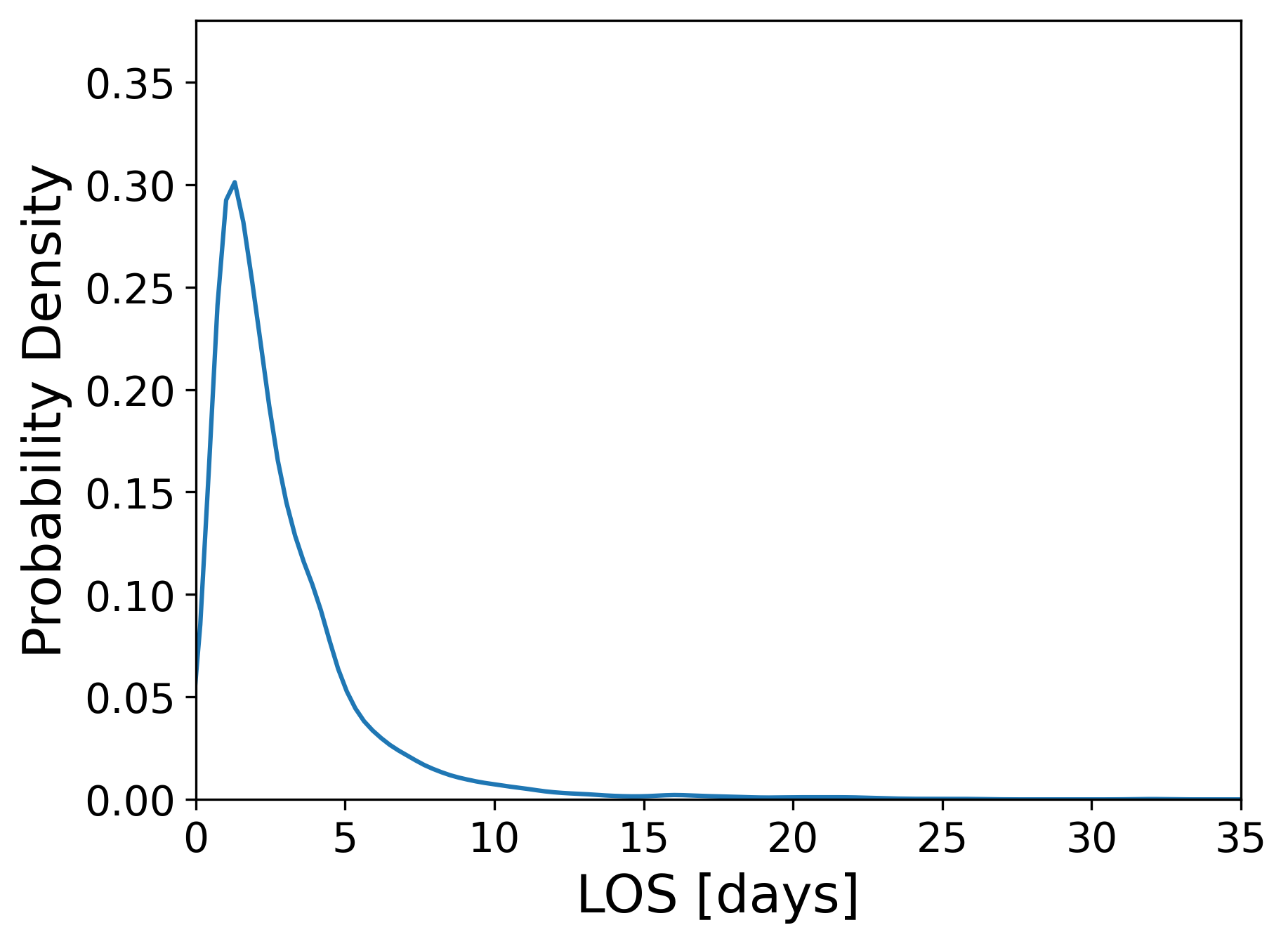}
        \caption{Type 4: very high curve}
        \label{fig:rate:los4}
    \end{subfigure}
    \begin{subfigure}[b]{0.49\textwidth}
        \includegraphics[scale=0.35]{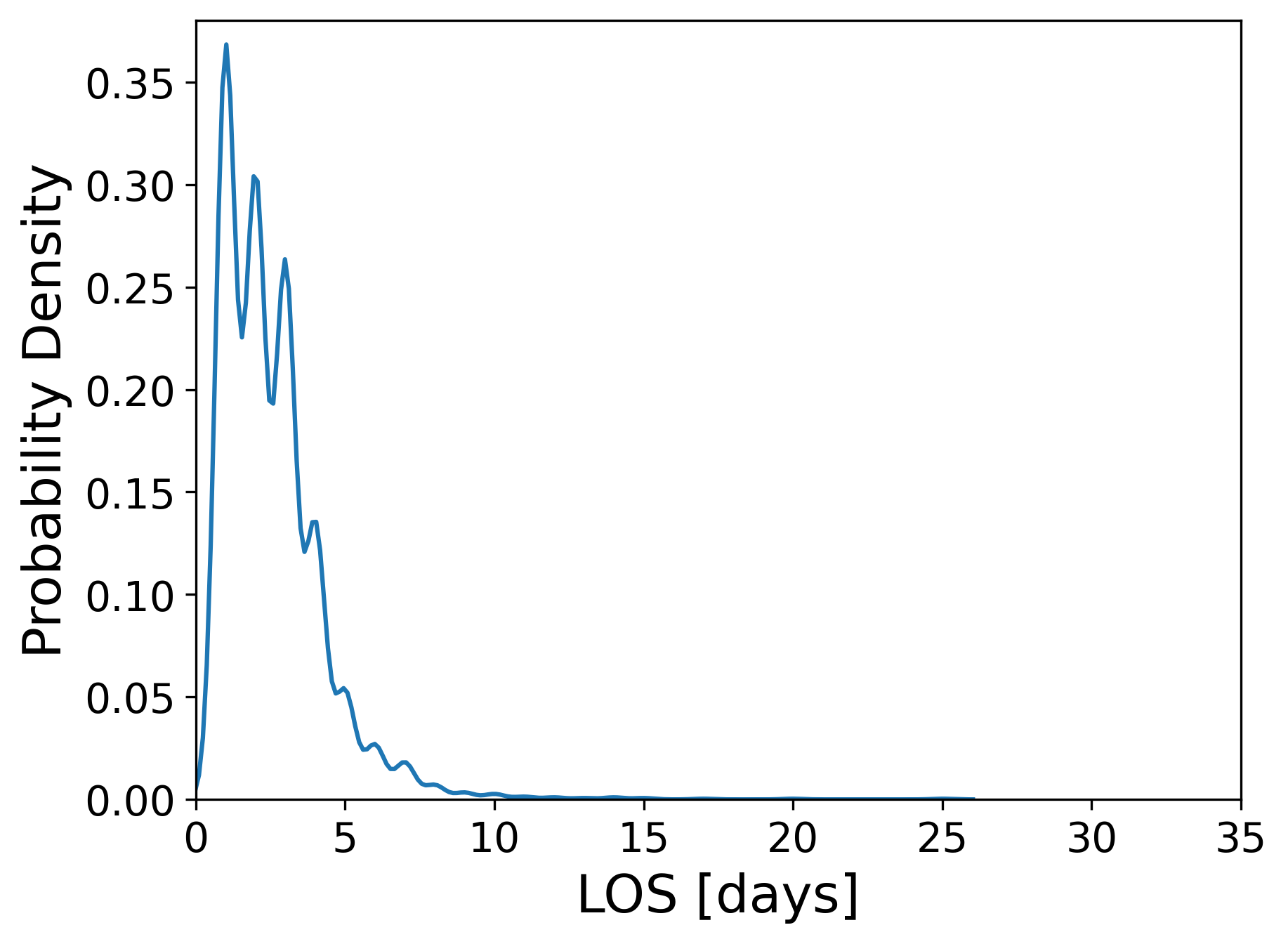}
        \caption{Type 5: extreme curve}
        \label{fig:rate:los5}
    \end{subfigure}
    \caption{Five types of \ac{los} distributions}
    \label{fig:rate_params:los}
\end{figure}

We further observe a strong correlation between a patient's age and \ac{los}.
To capture this relationship, we compute a joint probability distribution for each ward.
Patients within a ward are grouped into age classes of width 5 to ensure sufficient data per class for estimating the \ac{los} distribution.
This yields a three-dimensional representation assigning a probability to each combination of age class and \ac{los}, where \ac{los} is restricted to the range 0--24 days, which covers the vast majority of observed patients.
Similarities in these joint distributions across wards allow us to cluster them into five types. We visualize these types using heat maps, where darker regions indicate higher probability, cf. \cref{fig:rate_params:agelos}.
\begin{figure}[ht!]
    \centering
       \begin{subfigure}[b]{0.49\textwidth}
        \includegraphics[scale=0.35]{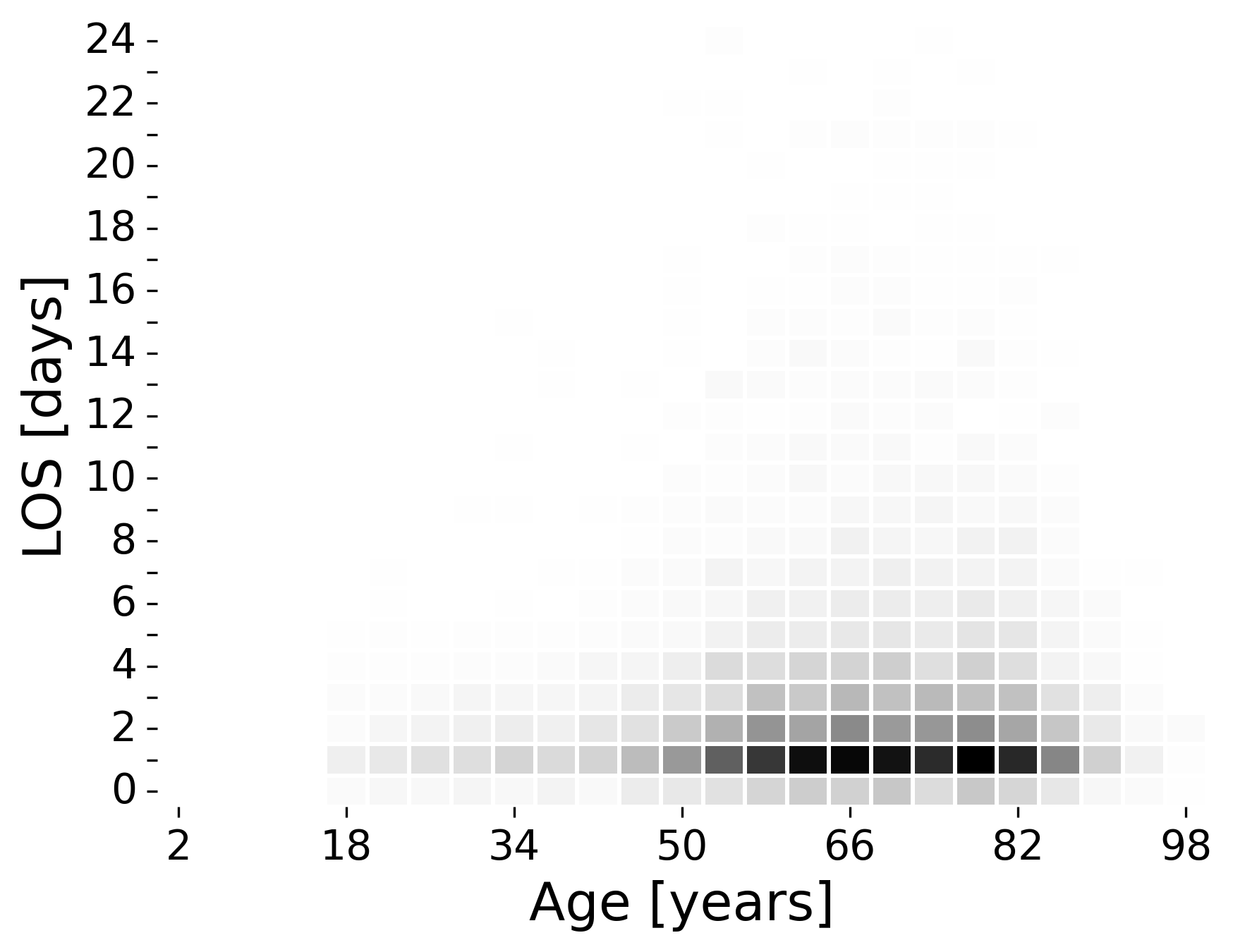}
        \caption{Type 1: short}
        \label{fig:rate:agelos1}
    \end{subfigure}
    \begin{subfigure}[b]{0.49\textwidth}
        \includegraphics[scale=0.35]{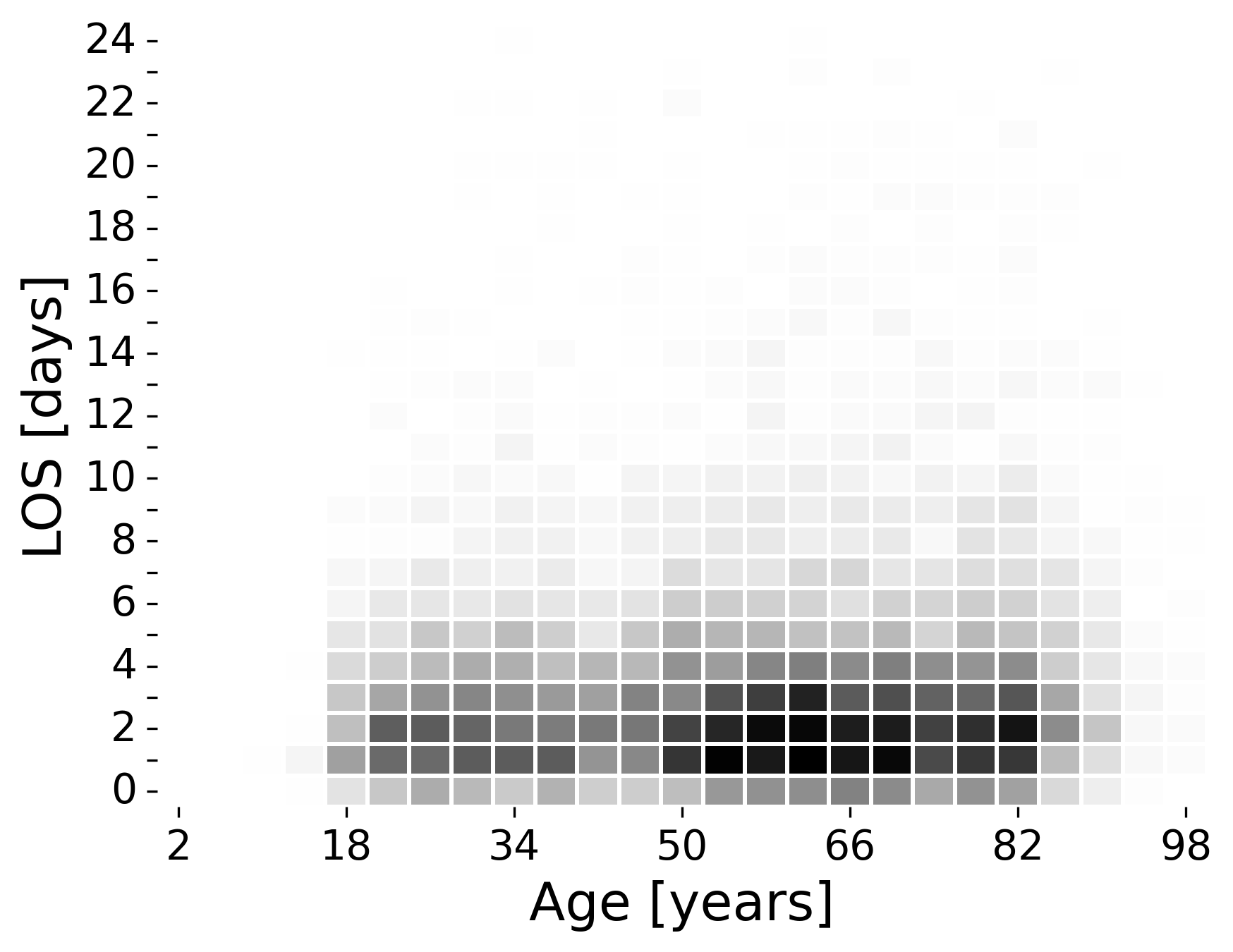}
        \caption{Type 2: middle}
        \label{fig:rate:agelos2}
    \end{subfigure}
       \begin{subfigure}[b]{0.49\textwidth}
        \includegraphics[scale=0.35]{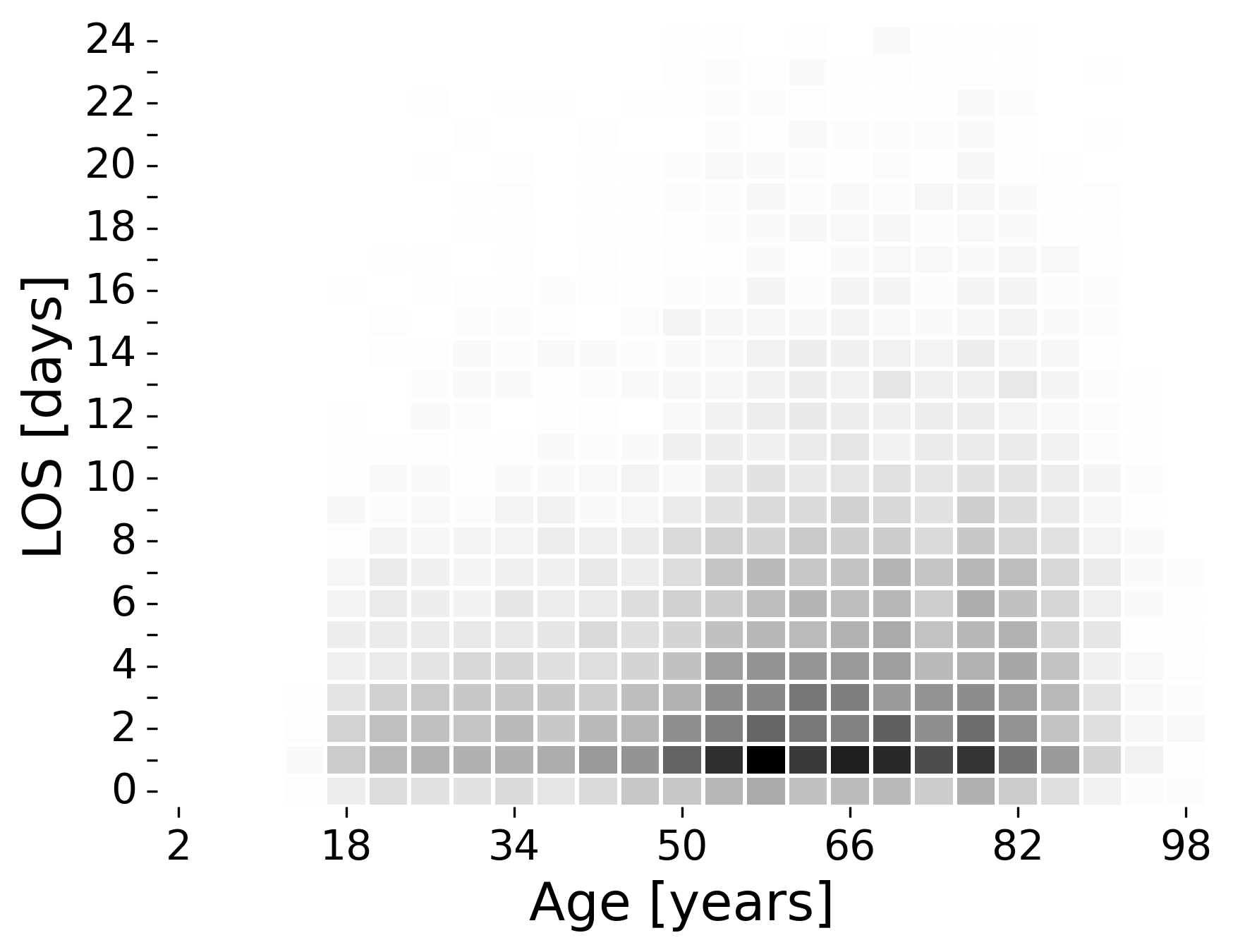}
        \caption{Type 3: triangle}
        \label{fig:rate:agelos3}
    \end{subfigure}
    \begin{subfigure}[b]{0.49\textwidth}
        \includegraphics[scale=0.35]{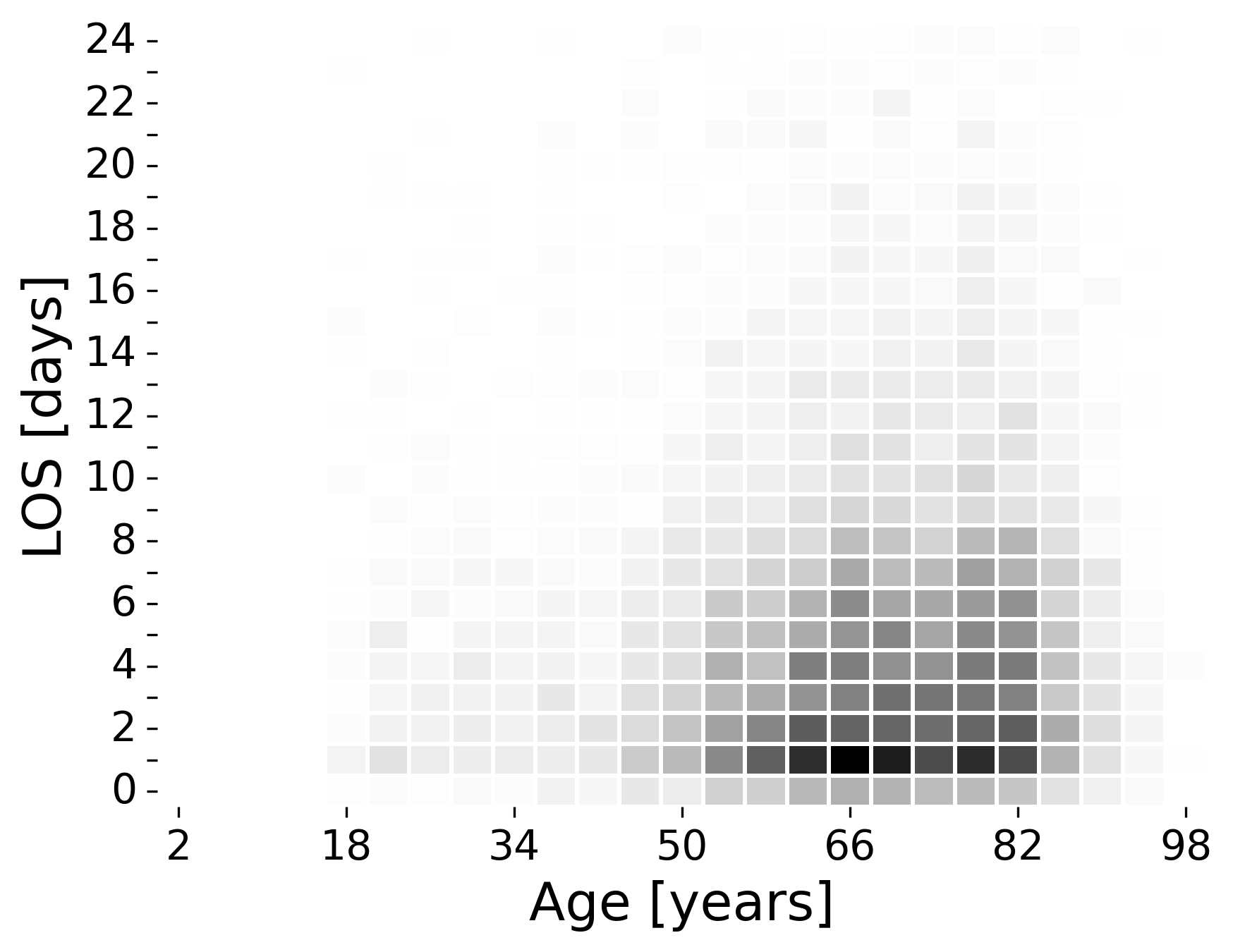}
        \caption{Type 4: short and old}
        \label{fig:rate:agelos4}
    \end{subfigure}
    \begin{subfigure}[b]{0.49\textwidth}
        \includegraphics[scale=0.35]{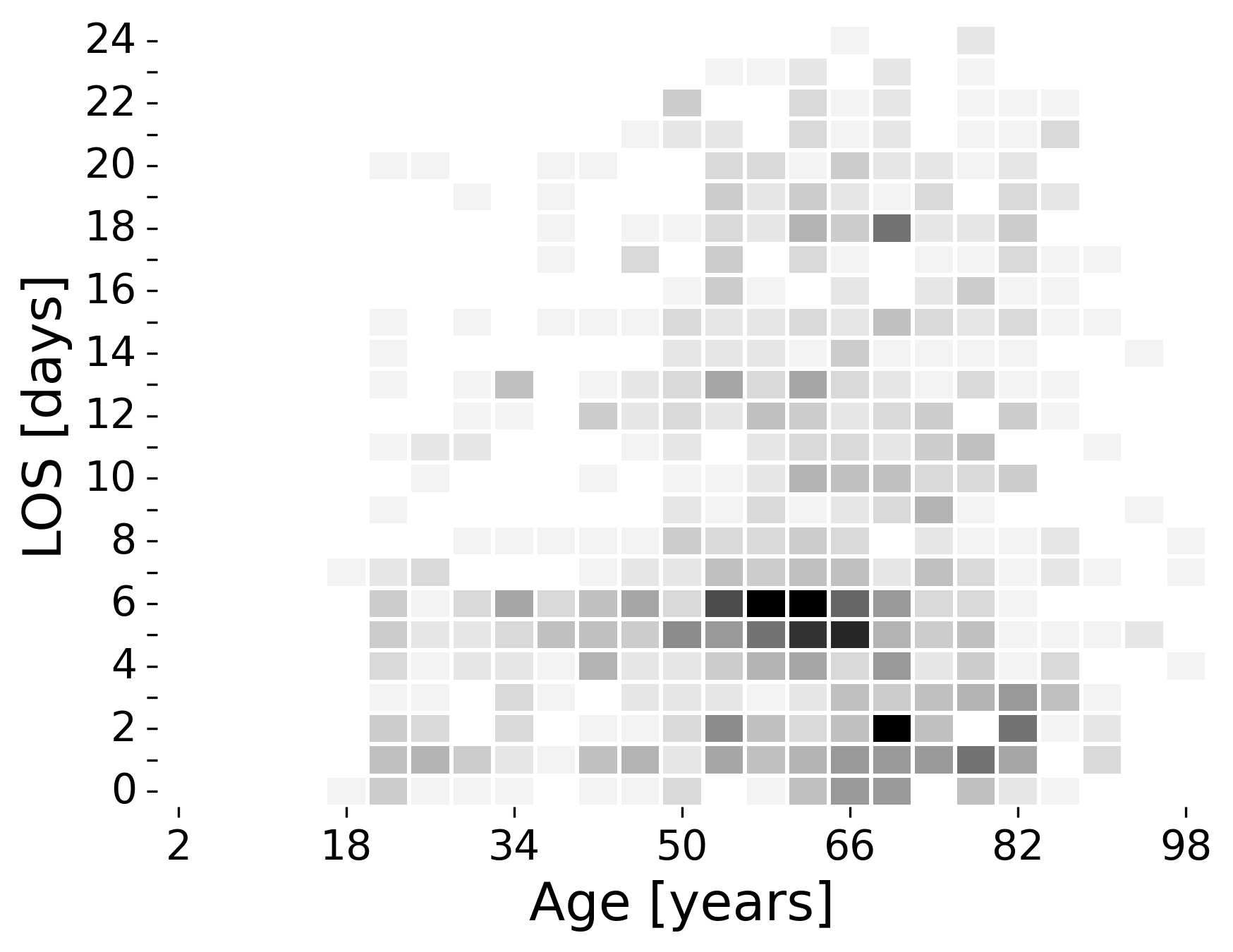}
        \caption{Type 5: outlier}
        \label{fig:rate:agelos5}
    \end{subfigure}
    \caption{Joint age and \ac{los} probability distribution where darker spots indicate higher probability}
    \label{fig:rate_params:agelos}
\end{figure}

For each ward, we compute the probability distribution of a patient's \ac{lor} and clustered the wards accordingly, yielding in five distinct distribution types (cf. \cref{fig:rate_params:lor}). The global \ac{lor} probability distribution is well approximated by a log-normal distribution with parameters $\mu=4.652$ and $\sigma=1.90$, estimated from the entire dataset\cref{note:lognormal_fit}.

\begin{figure}[ht!]
    \centering
       \begin{subfigure}[b]{0.49\textwidth}
        \includegraphics[scale=0.35]{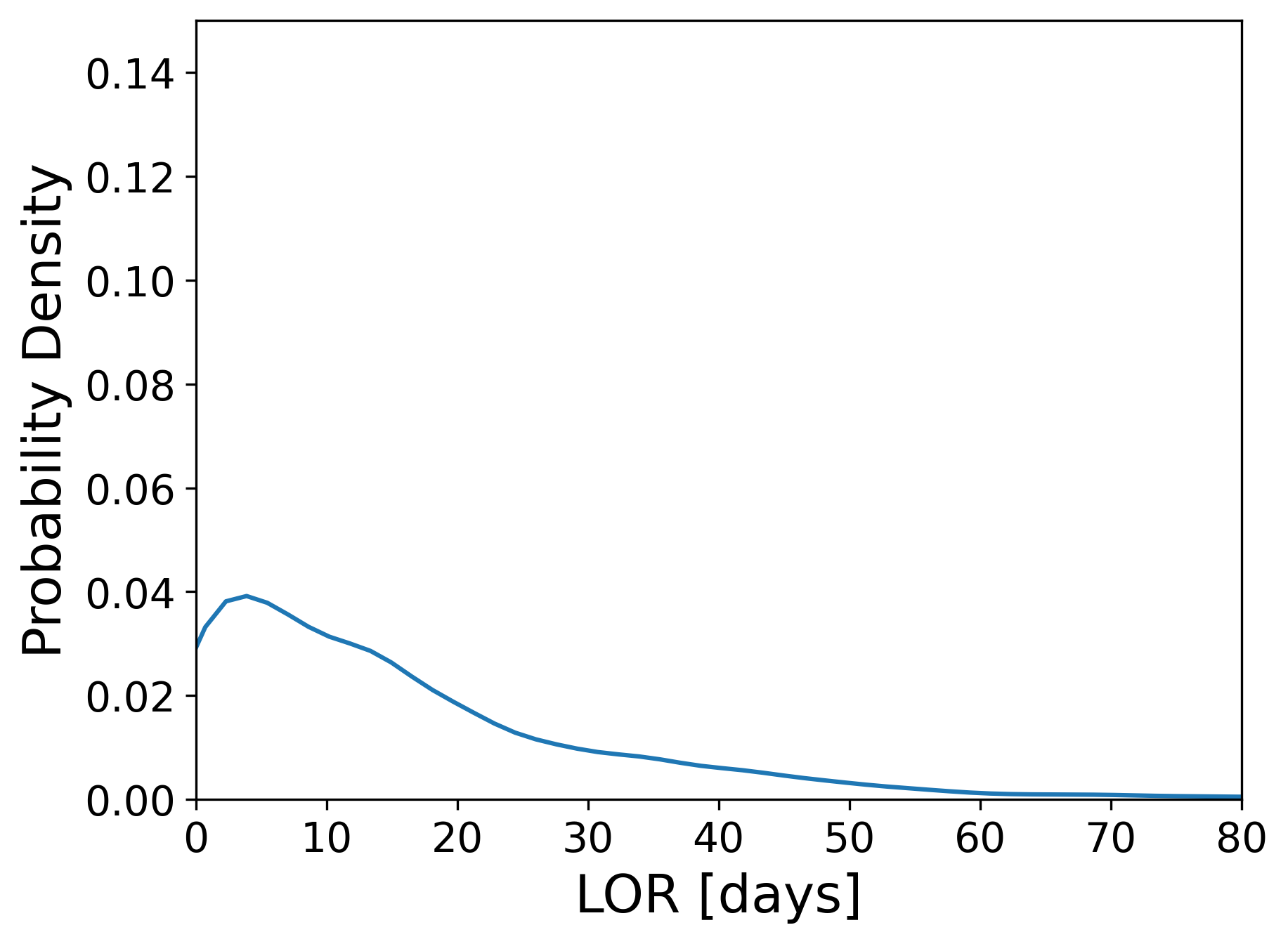}
        \caption{Type 1: evenly}
        \label{fig:rate:lor1}
    \end{subfigure}
    \begin{subfigure}[b]{0.49\textwidth}
        \includegraphics[scale=0.35]{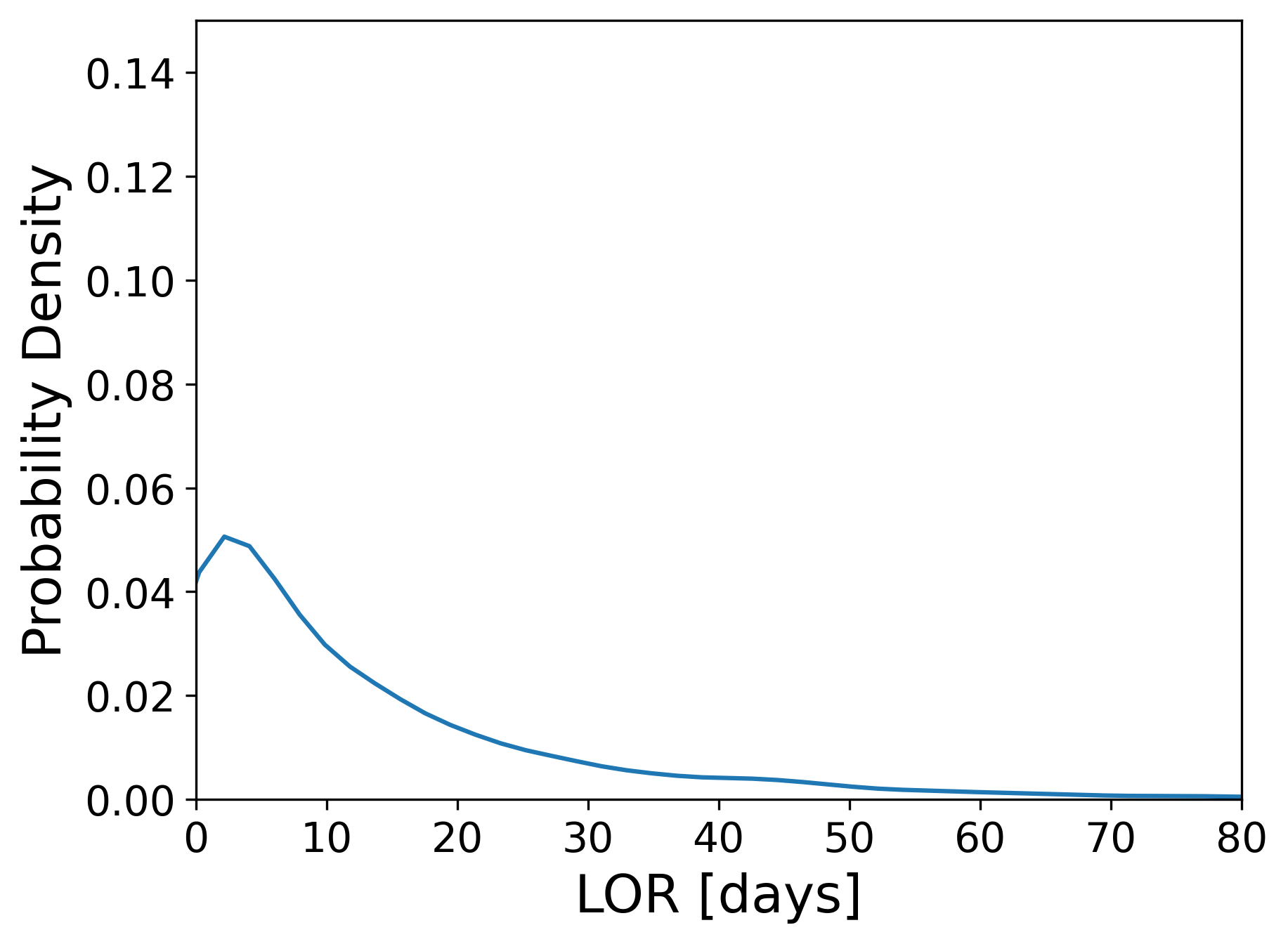}
        \caption{Type 2: middle peak}
        \label{fig:rate:lor2}
    \end{subfigure}
       \begin{subfigure}[b]{0.49\textwidth}
        \includegraphics[scale=0.35]{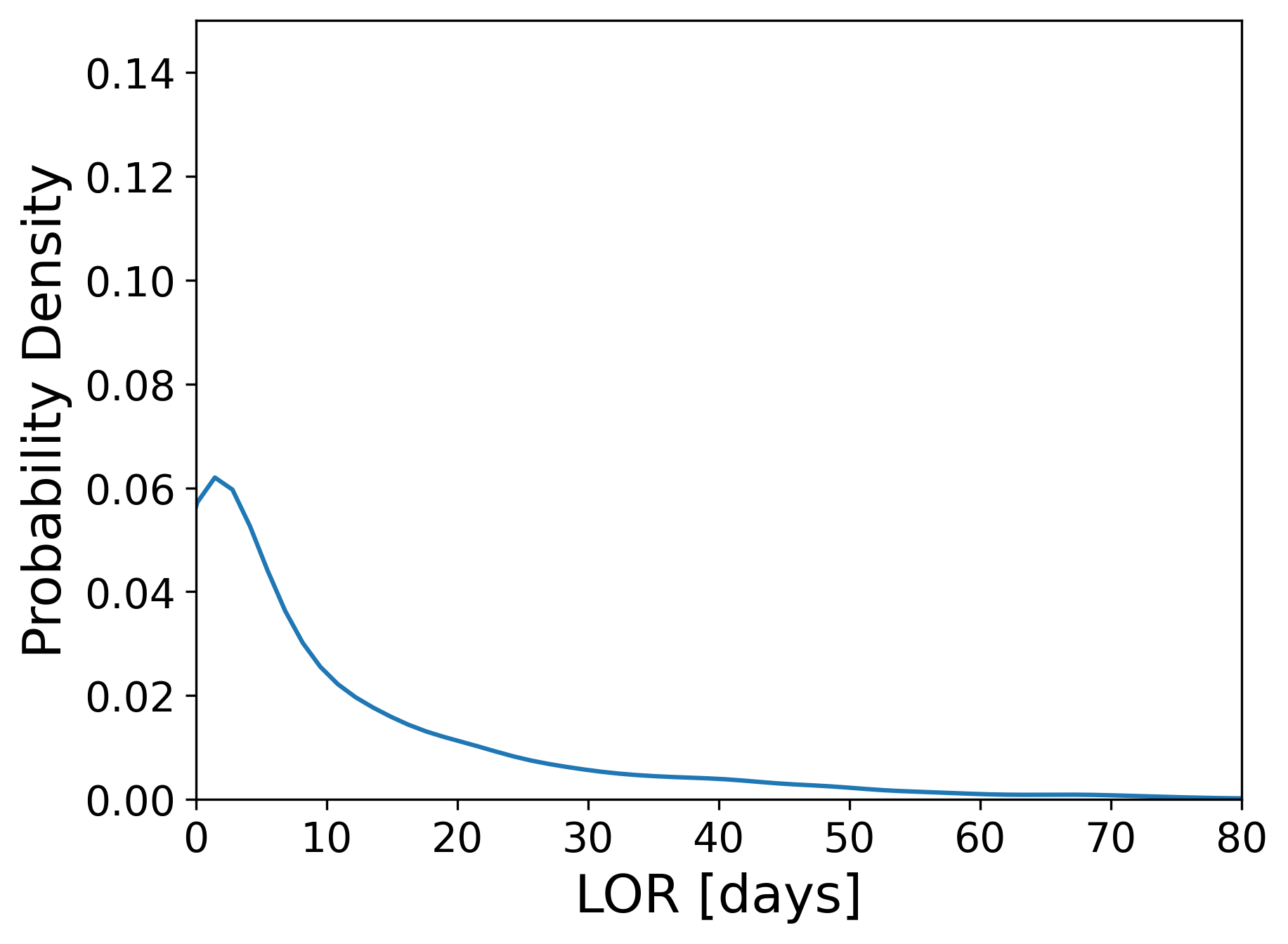}
        \caption{Type 3: higher peak}
        \label{fig:rate:lor3}
    \end{subfigure}
    \begin{subfigure}[b]{0.49\textwidth}
        \includegraphics[scale=0.35]{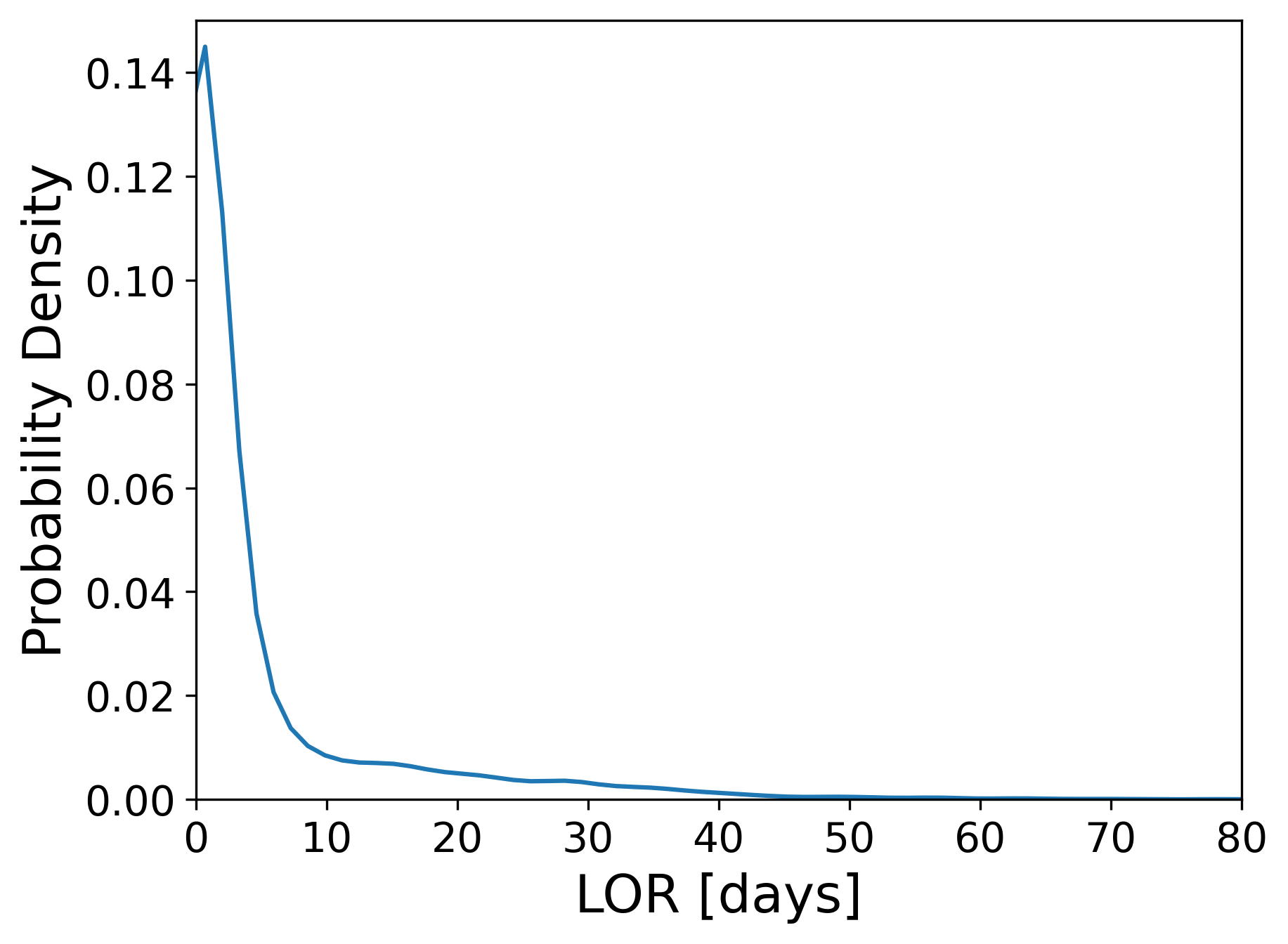}
        \caption{Type 4: very early peak}
        \label{fig:rate:lor4}
    \end{subfigure}
    \caption{Four types of \ac{lor} distributions}
    \label{fig:rate_params:lor}
\end{figure}
\FloatBarrier

Although the probabilities of emergency status, gender, single-room entitlement, and accompanying person also depend on the patient's age, we did not observe significant differences across different wards.
Therefore, we estimate for those only the general age-dependent probability based on the entire dataset.
Each distribution is well approximated by a cubic polynomial, and we used the \textit{Polyfit} method\footnote{\url{https://numpy.org/doc/stable/reference/generated/numpy.polynomial.polynomial.polyfit.html}} to determine the corresponding coefficients, cf. \cref{fig:rate_params:all}.\\
The rate of female patients is modeled as
\[2.58 \cdot 10^{-6}x^3 -3.17 \cdot 10^{-4}x^2 +8.95\cdot10^{-3}x +0.438.\]
The rate of emergency patients is modeled as
\[2.22 \cdot 10^{-6}x^3 +2.99\cdot 10^{-4}x^2 +1.02 \cdot 10^{-2}x +0.28.\]
The rate of single-room entitlement patients is modeled as
\[1.62 \cdot 10^{-6}x^3 +2.87 \cdot 10^{-4}x^2 +1.35 \cdot 10^{-2}x +0.27.\]
The rate of patients with an accompanying person is modeled as
\[5.65\cdot 10^{-8}x^3 +2.83\cdot 10^{-5}x^2 +3.02 \cdot 10^{-3}x +0.0978.\]

\begin{figure}[ht!]
    \centering
    \includegraphics[width=0.9\textwidth,keepaspectratio]{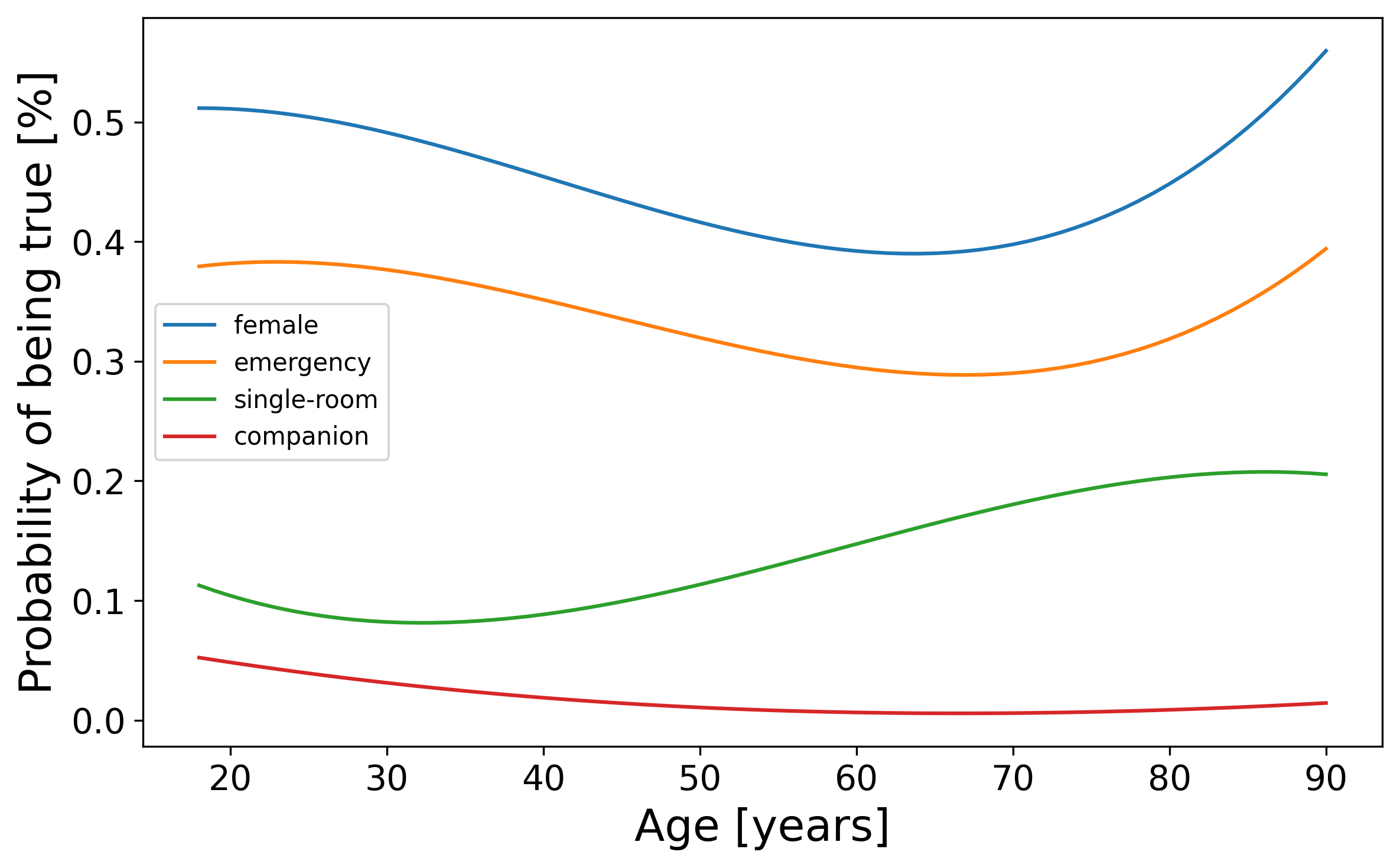}
    \caption{Polynomial functions to obtain a rate for each boolean attribute}
    \label{fig:rate_params:all}
\end{figure}

\subsection{Operating the instance generator}
The instance generator can be used either via the Python API or through the \ac{gui} which is accessible in form of a web application.
In this paper, we focus on the \ac{gui}, as it highlights all parameter choices.
An example demonstrating the Python API is included in the code repository (see \texttt{generator\textunderscore api.py} file).
The \ac{gui} guides users step by step through the parameter selection process as visualized in the flow chart \cref{fig:gui}.
In the first step, users may either load a previously saved parameter setting (referred to as a \emph{template}) or create a new one.
Loading a template sets all parameters to the stored values, but modifications are still possible.
In this step, users also define the planning horizon $\mathcal{T}$ and the desired load factor $\loadf_{\text{input}}$.
Moreover, they can choose between using the joint age-\ac{los} distributions or modeling  both attributes independently.
An additional option ensures feasibility with respect to gender-separated rooms, which is relevant in countries where mixed-gender rooms are not permitted.
If feasibility is not enforced, instances with a high load and long planning horizon are likely to be infeasible with respect to gender separation (see \cref{fig:boxplot_load_factor_planning_horizon}). 
Disabling the constraint also allows load factors exceeding $1$, i.e., $\loadf_{\text{input}} > 1$.

In the second step, users specify the number of rooms and their bed capacities. All choices are allowed since feasibility is ensured through a dynamic program, that will be introduced in \cref{sec:feas}, although excessively large room sizes can substantially increase computational time.

In the third step, users either select one of the five joint age-\ac{los} distributions (see \cref{fig:rate_params:agelos}), or if independent modeling was chosen in Step 1, select separate distributions for age and \ac{los}.
Options include custom uniform distributions, custom normal (for age) or log-normal distributions (for \ac{los}), or one of the ward-specific empirical distributions extracted from the data (see \cref{fig:rate_params:age,fig:rate_params:los}).
Default parameters correspond to the distribution fitted to the entire dataset.
Additionally, users may define custom minimum and maximum ages and \ac{los} values.

In the fourth step, users define a probability distribution for the \ac{lor}.
Available options include a custom uniform distribution, a custom log-normal distribution, or a ward-specific distribution (see \cref{fig:rate_params:lor}).
The default parameters of the log-normal distribution correspond to those fitted to the entire dataset.
Possible \ac{lor} values can be clipped but must be at least one, as patients with \ac{lor} of zero are defined as emergency patients; their frequency is specified in the next step.

In the fifth step, users specify the rates of female patients, emergency patients, patients entitled to a single-room, or patients accompanied by another person.
All rates depend on age and can be specified either per age class or directly via a polynomial function.
Rates defined per age class are internally converted into a cubic polynomial.
Default values correspond to those derived from the entire dataset (see \cref{fig:rate_params:all}).

In the last step, users specify how many instances to generate and the destination for storing them.

\begin{figure}
\small
\centering
\begin{tikzpicture}[node distance=2cm,every label/.style={align=left}]
\tikzstyle{startstop} = [rectangle, rounded corners, minimum width=2.5cm, minimum height=1cm, draw=black,text width=4cm]
\tikzstyle{process} = [rectangle, minimum width=1.5cm, minimum height=1cm, draw=black,text width=4cm]
\tikzstyle{decision} = [diamond, minimum width=2cm, minimum height=2cm, text centered, draw=black, text width = 2cm]
\tikzstyle{YN} = [minimum width=0cm, minimum height=0cm, text centered]
\tikzstyle{arrow} = [thick,->,>=stealth]

\node (start) [process] {\underline{Start}\\ - load/save template\\ - set planning horizon\\ - set load factor\\ - ensure feasibility w.r.t.\\ \phantom{-} gender-separation?\\ - use joint \ac{los}-age\\ \phantom{-} distribution?};
\node (beds) [process, right of = start,xshift=3cm] {\underline{Rooms}\\ - choose room capacities\\ \phantom{-} and quantities};
\draw [arrow] (start) -- (beds);
\node (indLOS) [decision, below of = beds, yshift=-1.5cm] {Joint \ac{los}-age distribution?};
\node (los) [process, left of = indLOS,xshift=-3cm] {\underline{\ac{los}}\\ - choose \ac{los} distribution};
\draw [arrow] (beds) -- (indLOS);
\draw [arrow] (indLOS) -- (los);
\node (no1) [YN,below of=indLOS, yshift=-0.6cm,xshift=0.4cm] {Yes};
\node (yes1) [YN,left of=indLOS, yshift=0.5cm,xshift=-0.4cm] {No};
\node (age) [process, below of = los,] {\underline{age}\\ - choose age distribution};
\draw [arrow] (los) -- (age);
\node (losage) [process, below of = indLOS,yshift=-2cm] {\underline{age \& \ac{los}}\\ - choose joint \ac{los}-age\\ \phantom{-} distribution};
\draw [arrow] (indLOS) -- (losage);
\node (lor) [process, below of = age,] {\underline{\ac{lor}}\\ - choose \ac{lor} distribution};
\draw [arrow] (losage) -- (lor);
\draw [arrow] (age) -- (lor);
\node (param) [process, below of = lor, yshift=-1cm] {\underline{other parameters}\\ - choose age-dependent\\ \phantom{-} distribution for\\ \; - gender\\ \; - emergency patients\\ \; - accompanying person\\ \; - single-room entitlement};
\draw [arrow] (lor) -- (param);
\node (gen) [process, below of = losage,yshift=-1cm] {\underline{Generate}\\ - set number of instances\\ \phantom{-} to generate\\ - set path to folder to\\ \phantom{-} store instances};
\draw [arrow] (param) -- (gen);
\end{tikzpicture}
\caption{Flow chart for the instance generator's \acs{gui}}
\label{fig:gui}
\end{figure}

\subsection{Generation process}
The output of the generator is a set of instances, each consisting of a set of patients with associated attributes, such that the prescribed load factor $\loadf_{\text{input}}$ is achieved for the given set of rooms.
Each instance is stored in a separate \texttt{json}-file using a format consistent with that described in \cite{Brandt2025}.

In the first step of the generation process, we create a large set of patients $\Ppool$ with their personal characteristics, particularly the \ac{lor} and \ac{los} values.
In the second step, we randomly select patients $p \in \Ppool$ and assign them an admission date $\arr(p)$ and add them to $\P$ until $\loadf_{\text{input}}$ is achieved.
Consequently, not all patients generated for the initial patient pool will be part of the final instance.
As previously stated, the registration and discharge dates, $\reg(p)$ and $\dis(p)$, can now be computed directly from the \ac{los} and \ac{lor} values of patient $p$.
We ensure $0 \le \reg(p) \le \arr(p) \le T$, but allow for $\dis(p)$ to exceed $T$ in order to preserve the original \ac{los} value.
Consequently, \cref{eq:los_to_patientCount} must be slightly adjusted to
\begin{align*}
    \sum_{p \in \P} \los(p) = \sum_{t \in \mathcal{T}} |\P(t)| + \Delta(\P) \quad
\end{align*}
with $ \Delta(\P) = \sum_{p \in \P} \max(0,\dis(p)-T)$.
We can upper bound $\Delta(\P)$ by
\begin{align*}
    \Delta(\P) = \sum_{p \in \P} \max(0,d(p)-T) = \sum_{p \in \P(T)} d(p)-T \leq \sum_{p \in \P(T)} \los(p).
\end{align*}
Our goal is to generate a set $\P \subseteq \Ppool$ that achieves the prescribed load factor $\loadf_{\text{input}}$.
Because $\P$ is not known in advance, we treat $|\P|$ as a random variable,
and thus $\loadf$ becomes a random variable as well.



To determine the required size of the patient pool $|\Ppool|$, we use the average \acl{los} $\Ex[\los]$ corresponding to the chosen \ac{los} distribution. We generate
\begin{align*}
    \left| \Ppool \right| = 2 \cdot \ceil*{\frac{\ctotal \cdot \nT \cdot \Ex[\loadf]}{\Ex[\los]}}
\end{align*}
patients in the initial patient pool $\Ppool$. We see that
\begin{align*}
     \ctotal \cdot \nT \cdot \loadf
    = \ctotal \cdot \sum_{t \in \mathcal{T}} \loadf_t 
    = \sum_{t \in \mathcal{T}} |\P(t)|
    = \sum_{p \in \P} \los(p) - \Delta(\P) 
    \geq \sum_{p \in \P \setminus \P(T)} \los(p).
\end{align*}
Using the linearity and monotonicity of the expected value, this guarantees that the pool is sufficiently large, i.e., $\left| \Ppool \right| \geq |\P|$, since
\begin{align*}
    \ctotal \cdot \nT \cdot \Ex[\loadf]
    \geq \Ex[\los] (|\P| - |\P(T)|)
    \geq \Ex[\los] \cdot \frac{1}{2}|\P|
\end{align*}
holds.
Note that the last step uses the condition $|\P(T)| \leq |\P|/2$ which becomes true for sufficiently long planning horizons $T$.

To construct the patient pool $\Ppool$, we begin by generating the age, \ac{los}, and \ac{lor} attributes.
When a predefined distribution type for an attribute is selected, the generator uses inverse transform sampling as follows:
Independent distribution types are represented by an empirical probability mass function defined by relative frequencies $f_i$ for each observed attribute value $i$ in the corresponding dataset.
From this, an empirical cumulative distribution function $F$ is constructed. 
The generator then draws a uniform random number $u \in [0,1]$, and selects the smallest value $i$ with $F(i) \geq u$.
For the dependent age–\ac{los} distributions, sampling is performed using a marginal–conditional decomposition.
First, an age class is selected using inverse transform sampling from the empirical marginal age distribution.
Each age class is associated with a pair of log-normal parameters $(\mu,\sigma)$ that characterize the conditional distribution of \ac{los}.
Subsequently, an age value is sampled uniformly from the corresponding age interval of the age class, and the \ac{los} is drawn from the associated log-normal distribution.
As established in the data analysis, age values are aggregated into classes of size at least 60 to guarantee sufficiently large sample sizes for the reliable estimation of the conditional \ac{los} distributions.

%

The remaining patient attributes (gender, single-room entitlement, presence of an accompanying person, and emergency status) are generated conditional on the sampled age.
For patients classified as emergencies, the \ac{lor} value is set to $0$.



Once the patient pool $\Ppool$ has been generated, we iterate over the planning horizon $\T$ to construct the final patient set $\P$.
For each day $t\in \T$, patients are selected from $\Ppool$, their admission date is set to $t$, and they are added to $\P$ as long as the resulting instance remains feasible and the target load has not been reached.
Once the admission date $\arr(p)$ of a patient is fixed, the corresponding registration date $\reg(p)$ and discharge date $\dis(p)$ are determined.
The generation process allows for day-to-day variability in the number of female and emergency patients. However, when averaged over the entire planning horizon, the prescribed proportions of female and emergency patients are approximately met.

In case, feasibility should be enforced, we verify the feasibility using a dynamic programming procedure that we will be explained in more detail in \cref{sec:feas}.
As a consequence, it may occur that the desired daily load cannot be achieved on certain days due to feasibility constraints.
To compensate for such deviations on subsequent days, we consider the cumulative load factor up to day $t \in \T$, defined as
\begin{align*}
    \loadf_{\leq t} = \frac{1}{t}\sum_{t^{\prime}=1}^t \loadf_{t^{\prime}} = \frac{1}{t \cdot \ctotal} \sum_{t^{\prime}=1}^t |\P(t^{\prime})|,
\end{align*}
and use it jointly with the current daily load $\loadf_t$ as a stopping criterion for admitting additional patients on day $t$.

A sketch of the generation procedure is provided in \cref{alg:patients:select_from_pool}.



\begin{algorithm}[ht!]
    \caption{Select from patient pool} \label{alg:patients:select_from_pool}
    \SetKwInput{KwInput}{Input} 
    \KwInput{Patient pool $\Ppool$, Planning horizon $\T$, load factor $\loadf$}
    \SetKwInput{KwOutput}{Output} 
    \KwOutput{Final patient set $\P \subseteq \P_{Pool}$}
    \medskip
    Initialize $\Ppool = \{p_1, \dots, p_m\}$, $\P \gets \emptyset$, $i \gets 1$\;
    \For{$t = 1, \dots, T$} {
        Set the admission date $\arr(p_i) \gets t$\;
        Calculate registration date $\reg(p_i)$, and discharge date $\dis(p_i)$\;
        \While{$\P \cup \{p_i\}$ is feasible \textbf{and} $\min(\loadf_t,\loadf_{\leq t}) \leq \loadf$ \textbf{and} $\loadf_{\leq T} \leq \loadf$} {
            $\P \gets \P \cup \{p_i\}$ \;
            \lIf{$i<m$}{
                Set $i \gets i+1$
            } \lElse {
                \Return $\P$
            }
            Set the admission date $\arr(p_i) \gets t$\;
            Calculate registration date $\reg(p_i)$, and discharge date $\dis(p_i)$\;
        }
    }
    \SetKwInput{KwReturn}{Return} 
    \KwReturn{$\P$}
\end{algorithm}

\section{Feasibility problem}\label{sec:feas}
In this section, we describe the mathematical foundation for guaranteeing feasibility of the generated \ac{pra} instances. 
We introduce the underlying feasibility problem, analyze its complexity for arbitrary room sizes, and review known polynomial cases. In addition, we present two new polynomially solvable cases, thereby extending the existing literature. 
Finally, we present a computational study that illustrates the practical necessity of enforcing feasibility, showing that without such guarantees instances with large load factors and long planning horizons are likely to be infeasible. 
While this section is not essential for understanding the instance generator, it provides valuable insights into how feasibility of the generated instances is ensured.\\

The feasibility can be addressed independently for each time period, since patient-to-room assignments from different periods can be combined if patients are allowed to change rooms arbitrarily often.
Therefore, we consider an arbitrary but fixed time period $t \in \T$ and denote the number of female patients in the hospital during this period as $\nf$ and the number of male patients as $\nm$. We also define $R_c := \vert \{ r \in \R \mid \croom = c\} \vert$ as the number of rooms with a specific capacity $c \in \N$. 
Brandt et al.\ define the feasibility problem of \ac{pra} for an arbitrary but fixed time period $t \in \T$ as follows \cite{Brandt_2025}:

\begin{definition}[Feasibility \ac{pra} Problem]
    Given the number of female and male patients $\nf, \nm \in \mathbb{N}_0$, and room capacities $\croom \in \mathbb{N}$ for $r \in \R$, does there exist a subset $S \subseteq \R$ of rooms such that it can host all female patients while the male patients fit into the remaining rooms, i.e.,
    \begin{align}
        \sum_{r \in S} \croom \geq \nf \text{ and } \sum_{r \in \R \setminus S} \croom \geq \nm?\label{Def:FeasibilityProblem}
    \end{align}
\end{definition}

Brandt et al.\ proved $\mathcal{NP}$-completeness for the feasibility problem with arbitrary room capacities \cite{brandt_2024}. 
However, we can check feasibility in pseudopolynomial time by transforming an instance of \ac{pra} into an equivalent instance of \ac{bkp}. 
In \ac{bkp}, we are given $n$ item types with a weight $w_i \in \N$, a number $u_i$ for the multiplicity of item $i$, as well as a knapsack capacity $W \in \N$. \ac{bkp} asks if there exists a multiset $S \subseteq \{1, \dots,n \}$ such that the capacity constraint $\sum_{i \in S} w_i \leq W$ is fulfilled, there are no more than $u_i$ items of type $i$ in $S$, and $\sum_{i \in S} w_i$ is maximized \cite{Garey_1990}.
A \ac{pra} instance can be modeled as a \ac{bkp} instance in which each existing room capacity $\croom$ defines an item type with weight equal to $\croom$ and multiplicity equal to $R_{\croom}$.
The \ac{pra} instance is feasible if and only if an (optimal) solution $S$ of the \ac{bkp} instance exists that fulfills the capacity constraint $\sum_{r \in S} \croom \leq \ctotal - \nm$ with maximum profit $\sum_{r \in S} \croom \geq \nf$. Consequently, we can use the dynamic programming algorithm for \ac{bkp} to prove feasibility for \ac{pra} in pseudopolynomial time, cf. \cite{Martello1990}\\

For large instances, it might be beneficial to check whether feasibility can be verified in polynomial time by using one of the following criteria instead of using the pseudopolynomial algorithm.
So far, Brandt et al.\ derived for constant room capacities $c \in \N$ a necessary and sufficient condition for feasibility \cite{brandt_2024}: 
\begin{align} \label{eq:constant_room_size}
    \ceil*{\frac{\nf}{c}} + \ceil*{\frac{\nm}{c}} \leq \vert \R \vert.
\end{align} 
Later, Brandt et al.\ presented two more polynomial cases \cite{Brandt_2025}: If hospitals have, in addition to rooms with constant capacity $c \in \N$, at least $c-1$ single rooms, a feasible assignment exists if and only if the number of patients does not exceed the hospital's total capacity, i.e.,
\begin{align} \label{eq:two_room_sizes}
    \nf + \nm \leq \ctotal.
\end{align}
In case of even room capacity $ \{2, 2c\}$ with $c \in \N_{\geq 2}$ and $R_2 \geq c-1$, the instance is feasible if and only if either (i) $\nf$ and $\nm$ are both even and $\nf + \nm \leq \ctotal$ or (ii)  $\nf + \nm < \ctotal$.

Additional polynomial cases can be derived from \ac{ssp}, \ac{bkp}, and Frobenius Problem. 
Similarly to \ac{ssp}, \ac{pra} asks for a subset $S \subseteq \R$ of the rooms whose total capacity lies in $\{ \nf, \dots, \ctotal - \nm \}$.
\ac{ssp} is polynomially solvable, for example, if the items
$\{a_1, \dots, a_n\}$ form a superincreasing sequence, i.e. if $\sum_{i = 1}^j a_i \leq a_{j+1}$ for $j = 1, \dots, n-1$,
or if the items form an arithmetic sequence, i.e., if $a_i = a_1 + (i-1)j$ for $i =2, \dots, n$ and $j \in \N$, cf. \cite{alfonsin_1998}.
Consequently, the feasibility problem of \ac{pra} is polynomially solvable if the room capacities form a superincreasing or arithmetic sequence.
Another polynomial case is if the room capacities form a chain since \ac{bkp} can be solved in polynomial time, if the weights $w_i$ form a chain, i.e. if $w_i$ divides $w_{i+1}$ for $i = 1, \dots, n-1$, cf. \cite{Verhaegh_1997}.

Lastly, \ac{pra} is related to the Frobenius problem, which is defined by a set of natural numbers $\{a_1, \dots, a_n\}\subseteq \N$ and $N \in \N$. The problem asks if coefficients $x_1,\dots, x_n \in \N_0$ exist with $\sum_{i = 1}^n x_i a_i = N$, cf. \cite{Beihoffer_2005}. Similar, an instance of \ac{pra} is feasible if and only if  there exist an $N \in \left\{ \nf, \dots, \ctotal - \nm \right\}$ and $ x_i \leq R_{a_i}$, for $i =1, \dots,n$, with $\sum_{i=1}^n x_ia_i = N$.
In case of arbitrary integers as coefficients in the Frobenius problem, there might be an infinite number of solutions to the equation. However, if we place additional constraints on the coefficients and the target value, we obtain uniqueness. For example, let $a_1, a_2 \in \N$ be coprime, i.e., the only positive integer that divides both of them is one. Then, for every integer $N \geq (a_1 -1)(a_2 -1)$, there exists exactly one pair $x_1,x_2 \in \N_0$ with $x_2 < a_1$ and $N= x_1 a_1 + x_2a_2$. The pair $x_1, x_2$ can be calculated in polynomial time, cf. \cite{Skupien_1993}.
Consequently, the feasibility problem with room capacities $\croom \in \{a_1, a_2\}$ for $a_1, a_2 \in \N$ coprime with $a_1 < a_2 $ , $R_{a_2} < a_1$, and $\nf \geq (a_1-1)(a_2-1)$ can be solved in polynomial time since we can calculate for each $N \in \left\{ \nf, \dots, \ctotal - \nm \right\}$ a unique representation. 
If there exists an $N$ such that its coefficients fulfill $x_1 \leq R_{a_1}$ and $x_2 \leq R_{a_2}$, the instance is feasible. Otherwise, the instance is infeasible because of the uniqueness of the representation.\\

Next, we discuss two new polynomial cases for the feasibility problem.
We first derive a feasibility criteria for instances with room sizes in $\{1, \dots, n \}$ and $\{b^0, \dots, b^n\}$ with $b \in \N_{\geq 2}$. This result is a special case of our insights from related problems since $1, \dots, n $ is an arithmetic sequence and $b^0, \dots, b^n$ a superincreasing sequence. However, this extension allows multiple rooms of the same size.
Afterwards, we generalize the result by multiplying the room sizes $\{1, \dots, n \}$ and $\{b^0, \dots, b^n\}$ with a scalar $a \in \N$.

\begin{lemma}\label{Lemma28}
    Let $(\nf, \nm, \R, c)$ be an instance with
    \begin{enumerate}[(i)]
        \item $\croom \in \{1, \dots , n\}$ for all $r \in \R$ and $R_i > 0$ for $i =1, \dots,n$ or
        \item $\croom \in \{ b^0, b^1, \dots, b^n \}$ for all $r \in \R$ and $R_{b^i} \geq b-1$ for $i = 0, \dots, n$
    \end{enumerate}
    and $b,n \in \mathbb{N}$ with $b \geq 2$. Then the instance is feasible if and only if the number of patients does not exceed the total capacity, i.e. if and only if 
    \begin{align}
        \nf +  \nm \leq \ctotal. \label{eq:WardCapacityConstraint}
    \end{align}
\end{lemma}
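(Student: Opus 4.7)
My plan is to prove each direction separately. The forward implication ($\Rightarrow$) is immediate: any feasible assignment partitions the rooms into two sets whose capacities accommodate $\nf$ and $\nm$ respectively, so trivially $\nf + \nm \leq \sum_{r\in\R} \croom$.

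For the backward direction, I would reduce the problem to a subset-sum statement: setting $C := \sum_{r\in\R} \croom$, it suffices to exhibit a subset $S \subseteq \R$ with $\sum_{r \in S} \croom \in [\nf, C-\nm]$, and this interval is nonempty by hypothesis. My main claim, which I would prove for both room-size regimes, is that the set of achievable subset sums of the multiset of capacities is exactly $\{0, 1, \dots, C\}$; this instantly yields the desired $S$ (for instance one realising the sum $\nf$).

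The key structural observation driving both cases is the following: if we sort the rooms so that $c_1 \leq c_2 \leq \dots \leq c_{|\R|}$ and the chain condition $c_{i+1} \leq 1 + \sum_{j \leq i} c_j$ holds at every step, then a straightforward induction shows that the set of achievable subset sums is the full integer interval $\{0, 1, \dots, \sum_j c_j\}$, since each newly added room extends an existing interval $\{0, \dots, M\}$ by a shifted copy $\{c_{i+1}, \dots, M + c_{i+1}\}$ whose overlap with the original is guaranteed by the chain condition. Under hypothesis~(i), having at least one room of each capacity $1, 2, \dots, n$ produces the interval $\{0, \dots, n(n+1)/2\}$; any further room has capacity at most $n \leq n(n+1)/2$, so inserting it preserves the interval property. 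Under hypothesis~(ii), the initial rooms of sizes $2^0, 2^1, \dots, 2^n$ give the interval $\{0, \dots, 2^{n+1}-1\}$ via binary representation, and any additional room has capacity $2^i \leq 2^n < 2^{n+1}-1$, again preserving the interval.

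I do not anticipate a single hard obstacle; the main care will be in verifying the induction base case, i.e.\ that exactly one room of each required capacity already yields a full interval of subset sums. For case~(i) this means checking the identity $\{0, 1, \dots, n(n+1)/2\}$ via induction on $n$ using the bound $n \leq (n-1)n/2 + 1$ for $n \geq 2$, and for case~(ii) it is the standard binary expansion argument. Once the interval property is established, the conclusion follows immediately by picking any $S$ whose capacity sum lies in $[\nf, C-\nm]$, which is guaranteed by the hypothesis \eqref{eq:WardCapacityConstraint}.
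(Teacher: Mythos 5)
Your proof is correct, but it takes a genuinely different route from the paper's. The paper fixes the target value and inducts on $\nf$: given a subset $S'$ of rooms with total capacity exactly $\nf-1$, it picks a minimum-capacity room $r'$ outside $S'$, observes that $S'$ must then contain a room of every smaller size, and swaps out rooms of total capacity $c_{r'}-1$ (a single room of size $c_{r'}-1$ in case~(i); one room of each size $2^0,\dots,2^{\ell-1}$ in case~(ii)) so that the total increases by exactly one. You instead induct on the rooms themselves via the classical complete-sequence criterion: if the capacities sorted ascending satisfy $c_{i+1}\leq 1+\sum_{j\leq i}c_j$ (with a capacity-one room to start, which both hypotheses guarantee), then the achievable subset sums form the full interval $\{0,\dots,\sum_{r\in\R}\croom\}$, after which a set $S$ with $\sum_{r\in S}\croom=\nf$ exists whenever \cref{eq:WardCapacityConstraint} holds. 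Your version is more modular and strictly more general: a single lemma covers both hypotheses, as well as any other capacity multiset satisfying the chain condition (e.g.\ mixtures of the two families), and it sits naturally beside the discussion in \cref{sec:related_problems}, since your inequality is exactly the complement of the superincreasing condition used there. What the paper's swap argument buys in exchange is an explicit one-step modification of the room set, at the cost of re-verifying the key structural fact (``all smaller sizes already lie in $S'$'') separately in each case. Both arguments are constructive and both reduce the lemma to realizing $\nf$ exactly as a subset sum, so the difference is in the induction variable and the level of generality, not in substance.
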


\begin{proof}
    If \cref{eq:WardCapacityConstraint} is violated, the instance is infeasible because at least one patient cannot be assigned to a room without violating the capacity constraint. Therefore, we assume \cref{eq:WardCapacityConstraint} holds true and show that the instance is feasible by constructing a set $S \subseteq \R$ that satisfies \cref{Def:FeasibilityProblem}.
    We construct $S$ such that $\sum_{r \in S} \croom = \nf$ holds. \cref{eq:WardCapacityConstraint} implies $\sum_{r \in \R \setminus S} \croom \geq \nm$, thus making the instance feasible.

    We will show the existence of $S$ by induction over $\nf \leq \ctotal$.
    For the base case $\nf = 1$, we define $S = \{r_1\}$ for a single-room $r_1 \in \R$. Consider now the induction step.
    Let $S' \subseteq \R$ with $\sum_{r \in S'} \croom = \nf -1$. 
    To construct $S$ such that $\sum_{r \in S} \croom = F$, we choose a room $r' \in \R \setminus S'$ with minimum capacity $c_{r'}$.
    If $c_{r'} = 1$, then set $S = S' \cup \{r' \}.$ Else $c_{r'}>1$ and $S'$ contains a room of each size smaller than $c_{r'}$. 
    In case (i), it follows that $S'$ contains a room $r''$ with capacity $c_{r''} = c_{r'} -1$. Consequently, $S$ can be defined as $S'\setminus \{ r'' \} \cup \{r' \}$, cf. \cref{fig:Lemma28_Case1}.
    \begin{figure}[h]
	\centering
	\begin{minipage}[t]{.39\linewidth}
		\centering
		\begin{tikzpicture}[scale=.6]	
		
		\node[blacksquare, minimum width=.5cm, minimum height=.5cm, fill=gruen] at (0,2) {};
		\node[grayball] at (0,2) {};
		
		\node[blacksquare, minimum width=1cm, minimum height=.5cm, fill=gruen] at (1.55,2) {};
		\node[grayball] at (1.15,2) {};
		\node[grayball] at (1.95,2) {};
		
		\node[blacksquare, minimum width=1.5cm, minimum height=.5cm] at (3.9,2) {};
		\foreach \x in {1,2,0} {
			\node[grayball] at (\x*.8+3.1,2) {};
		}
		
        \node[label={below:\textcolor{black}{\footnotesize$r'$}}] at (4,1.8) {};
	
			\node[label={below:\textcolor{black}{\footnotesize$r''$}}] at (1.6,1.8) {};
			
			\draw[edge, <->] (1.7,2.6) to[bend left] (3.8,2.6) {};
		
		\end{tikzpicture}
	\end{minipage}
	\begin{minipage}[t]{.1 \linewidth}
		\centering
		\begin{tikzpicture}[scale=.6]
			\draw[edge, ->, very thick] (-0.9,1) --(0.9,1) {};

                \node[blacksquare, minimum width=.5cm, minimum height=.5cm, draw=none] at (0,2) {};
			\node[blacksquare, minimum width=.5cm, minimum height=.5cm, draw=none] at (0,0.3) {};
			
		\end{tikzpicture}
	\end{minipage}
	\begin{minipage}[t]{.39\linewidth}
		\centering			
		\begin{tikzpicture}[scale=.6]	
		
		\node[blacksquare, minimum width=.5cm, minimum height=.5cm, fill=gruen] at (0,2) {};
		\node[grayball] at (0,2) {};
		
		\node[blacksquare, minimum width=1cm, minimum height=.5cm] at (1.55,2) {};
		\node[grayball] at (1.15,2) {};
		\node[grayball] at (1.95,2) {};
		
		\node[blacksquare, minimum width=1.5cm, minimum height=.5cm, fill=gruen] at (3.9,2) {};
		\foreach \x in {1,2,0} {
			\node[grayball] at (\x*.8+3.1,2) {};
		}
		
		\node[label={below:\textcolor{black}{\footnotesize$r'$}}] at (4,1.8) {};

		\node[label={below:\textcolor{black}{\footnotesize$r''$}}] at (1.6,1.8) {};
		
		\draw[edge, <->] (1.7,2.6) to[bend left] (3.8,2.6) {};

		\end{tikzpicture}
	\end{minipage}

	\begin{minipage}[t]{.39\linewidth}
		\centering
		\begin{tikzpicture}[scale=.6]
			\node[blacksquare, minimum width=.45cm, minimum height=.45cm, fill=gruen, draw=none, label={right:\footnotesize Set $S'$ with capacity $\nf-1$}] at (1.5,-1) {};	
		\end{tikzpicture}
	\end{minipage}
	\begin{minipage}[t]{.1 \linewidth}
		\centering
		\begin{tikzpicture}[scale=.6]
		
		\end{tikzpicture}
	\end{minipage}
	\begin{minipage}[t]{.39\linewidth}
		\centering			
		\begin{tikzpicture}[scale=.6]
			\node at (1.5,-.2) {};
			\node[blacksquare, minimum width=.45cm, minimum height=.45cm, fill=gruen, draw=none, label={right:\footnotesize Set $S$ with capacity $\nf \textcolor{white}{-1}$}] at (1.5,-1) {};	
		\end{tikzpicture}
	\end{minipage}
	\caption{Induction step for case (i)}
        \label{fig:Lemma28_Case1}
\end{figure}
    In case (ii), the capacity of room $r'$ is given by $c_{r'} = b^\lambda$ for $\lambda \in \mathbb{N}$. Here, a subset $T \subseteq S'$ exists which includes exactly $b-1$ rooms of each capacity smaller than $c_{r'}$. The total capacity of $T$ is 
    \begin{align*}
    (b-1)\cdot \sum_{i = 0}^{\lambda-1} b^i = (b-1)\cdot\frac{1-b^\lambda}{1-b} = c_{r'} -1.    
    \end{align*}
    Thus, we define $S = S' \setminus T \cup \{r^{\prime}\}$, cf. \cref{fig:Lemma28_Case2}.
    \begin{figure}[h]
	\centering
	\begin{minipage}[t]{.39\linewidth}
		\centering
		\begin{tikzpicture}[scale=.6]	
		
		\node[blacksquare, minimum width=.5cm, minimum height=.5cm, fill=gruen] at (0,2) {};
		\node[grayball] at (0,2) {};
		
		\node[blacksquare, minimum width=1cm, minimum height=.5cm, fill=gruen] at (1.55,2) {};
		\node[grayball] at (1.15,2) {};
		\node[grayball] at (1.95,2) {};
		
		\node[blacksquare, minimum width=2cm, minimum height=.5cm] at (4.4,2) {};
		\foreach \x in {1,2,3,0} {
			\node[grayball] at (\x*.8+3.2,2) {};
		}

		\node[label={below:\textcolor{black}{\footnotesize$r'$}}] at (4.4,1.7) {};
		
		\node[blacksquare, minimum width=1.9cm, minimum height=.7cm, draw=red, rounded corners=.1cm] at (.99,2) {};
		\node[label={below:\textcolor{red}{\footnotesize$T$}}] at (1,1.7) {};
			
		\draw[edge, <->] (1.,2.8) to[bend left] (4.3,2.7) {};
		
		\node[blacksquare, minimum width=.45cm, minimum height=.45cm, fill=gruen, draw=none, label={right:\footnotesize Set $S'$ with capacity $\nf-1$}] at (0,-0.3) {};	
		
		\end{tikzpicture}
	\end{minipage}
	\begin{minipage}[t]{.1 \linewidth}
		\centering
		\begin{tikzpicture}[scale=.6]
			\draw[edge, ->, very thick] (-0.9,1.1) --(0.9,1.1) {};
			
			\node[blacksquare, minimum width=.5cm, minimum height=.5cm, draw=none] at (0,2) {};
			\node[blacksquare, minimum width=.5cm, minimum height=.5cm, draw=none] at (0,0.3) {};
			\node[blacksquare, minimum width=.4cm, minimum height=.4cm, draw=none]  at (0,-1.2) {};
		\end{tikzpicture}
	\end{minipage}
	\begin{minipage}[t]{.39\linewidth}
		\centering			
		\begin{tikzpicture}[scale=.6]	
		
		\node[blacksquare, minimum width=.5cm, minimum height=.5cm] at (0,2) {};
		\node[grayball] at (0,2) {};
		
		\node[blacksquare, minimum width=1cm, minimum height=.5cm] at (1.55,2) {};
		\node[grayball] at (1.15,2) {};
		\node[grayball] at (1.95,2) {};
		
		\node[blacksquare, minimum width=2cm, minimum height=.5cm, fill=gruen] at (4.4,2) {};
		\foreach \x in {1,2,3,0} {
			\node[grayball] at (\x*.8+3.2,2) {};
		}

		\node[label={below:\textcolor{black}{\footnotesize$r'$}}] at (4.4,1.7) {};

		\node[blacksquare, minimum width=1.9cm, minimum height=.7cm, draw=red, rounded corners=.1cm] at (.99,2) {};
		\node[label={below:\textcolor{red}{\footnotesize$T$}}] at (1,1.7) {};
		
		\draw[edge, <->] (1.,2.8) to[bend left] (4.3,2.7) {};
		
		\node[blacksquare, minimum width=.45cm, minimum height=.45cm, fill=gruen, draw=none, label={right:\footnotesize Set $S$ with capacity $\nf$}] at (0,-0.3) {};	
		\end{tikzpicture}
	\end{minipage}

			
			
	\caption{Induction step for case (ii) with $b=2$}
        \label{fig:Lemma28_Case2}
\end{figure}
    In both cases, we can construct a set $S \subseteq  \R$ with $\sum_{r \in S} \croom = \nf$, which concludes the induction. 
\end{proof}

As already mentioned earlier, we can use the above result to derive a feasibility criteria for the more general case in which room capacities are multiplied with a scalar $a \in \N$.

\begin{lemma}
    Let $(\nf, \nm, \R, c)$ be an instance with
    \begin{enumerate}[(i)]
        \item $\croom  \in \{a, na \}$ for all $r \in \R$ and $R_a \geq n-1$, or
        \item $\croom \in \{a, 2a, \dots, na\}$ for all $r \in \R$ and $R_{ia} >0$ for $i = 1, \dots, n$, or 
        \item $\croom \in \{b^0a, b^1a, \dots, b^na\}$ for all $r \in \R$ and $R_{b^ia} \geq b-1$ for $i = 0, \dots, n$
    \end{enumerate}
    and $a,b,n \in \mathbb{N}$ with $b \geq 2$. Then the instance is feasible if and only if
    \begin{align}
        \ceil*{\frac{\nf}{a}} + \ceil*{\frac{\nm}{a}} \leq \frac{1}{a} \ctotal. \label{eq:FeasibilityLemma29}
    \end{align}
\end{lemma}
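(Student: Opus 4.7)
The plan is to prove this by reducing to the already-established results via a scaling argument. All room capacities are multiples of $a$, which suggests dividing through by $a$ and using the earlier feasibility criteria on the resulting instance.

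The \emph{scaled instance} is defined by setting $\nf' := \ceil*{\nf/a}$, $\nm' := \ceil*{\nm/a}$, and $\croom' := \croom/a$ for each $r \in \R$. Under this rescaling, case (i) becomes capacities in $\{1,n\}$ with at least $n-1$ single rooms, case (ii) becomes $\croom' \in \{1,2,\dots,n\}$ with each size present, and case (iii) becomes $\croom' \in \{2^0,\dots,2^n\}$ with each size present. The feasibility conditions for the scaled instance are therefore covered by \cref{eq:two_room_sizes} and \cref{Lemma28} respectively: in each of the three cases, the scaled instance is feasible if and only if $\nf' + \nm' \leq \sum_{r \in \R} \croom'$, which is exactly \cref{eq:FeasibilityLemma29}.

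For the ``if'' direction, I would assume \cref{eq:FeasibilityLemma29}, invoke the appropriate scaled-instance result to obtain a partition $S \subseteq \R$ with $\sum_{r \in S} \croom' \geq \nf'$ and $\sum_{r \in \R \setminus S} \croom' \geq \nm'$, and then multiply by $a$ to conclude that the same partition satisfies $\sum_{r \in S} \croom \geq a\nf' \geq \nf$ and $\sum_{r \in \R\setminus S} \croom \geq a\nm' \geq \nm$, so $S$ witnesses feasibility of the original instance.

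For the ``only if'' direction, suppose there is a feasible partition $S$ of the original instance. The key observation is that $\sum_{r \in S} \croom$ is a sum of multiples of $a$, hence itself a multiple of $a$; as it is at least $\nf$, it must be at least $a\ceil*{\nf/a}$. Similarly $\sum_{r \in \R\setminus S} \croom \geq a \ceil*{\nm/a}$. Adding these two inequalities and dividing by $a$ yields \cref{eq:FeasibilityLemma29}.

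The main obstacle is simply recognizing that the divisibility of every $\croom$ by $a$ lets one replace $\nf$ and $\nm$ by their ceilings in the reverse direction without any loss; once this is noted, the argument reduces cleanly to the known results. The three cases (i)--(iii) differ only in which previously proven feasibility criterion one cites for the scaled instance, so the proof can treat them uniformly after the rescaling step.
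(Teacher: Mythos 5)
Your proof is correct and follows essentially the same route as the paper: both rescale by $a$, pass to the instance with capacities $\croom/a$ and patient counts $\ceil*{\nf/a}$, $\ceil*{\nm/a}$, and invoke \cref{eq:two_room_sizes} and \cref{Lemma28} on it, with the divisibility-by-$a$ observation doing the work in the ``only if'' direction. Your version is in fact a streamlined form of the paper's argument, since it dispenses with the intermediate auxiliary instance in which every room is split into rooms of capacity $a$.
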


\begin{proof}
    Let $\mathcal{I} =( \nf, \nm, \R, c)$ be an instance.
    We construct an auxiliary instance $\mathcal{I}^* = ( \nf^*, \nm^*, \R^*, c^*)$ with $\nf^* = \nf$ and $\nm^* = \nm.$
    For every room $r \in \R$, the set $\R^*$ contains $\frac{\croom}{a} $ rooms of size $a$.
    As all rooms in $\R^*$ have equal capacity $a$, according to \cref{eq:constant_room_size}, the instance $\mathcal{I}^*$ is feasible if and only if 
    \begin{align*}
        \ceil*{\frac{\nf^*}{a}} + \ceil*{\frac{\nm^*}{a}}  \leq \vert \R^* \vert
    \end{align*}
    which is equivalent to 
    \begin{align*}
        \ceil*{\frac{\nf}{a}} + \ceil*{\frac{\nm}{a}} \leq \frac{1}{a} \ctotal.
    \end{align*}

    Thus, it remains to show that $\mathcal{I}$ is feasible if and only if $\mathcal{I}^*$ is feasible.
    Let $S \subseteq \R$ be a feasible assignment for $\mathcal{I}$. Since all room in $\R^*$ have equal capacity, we define $S^* \subseteq \R^*$ with $\vert S^* \vert = \frac{1}{a} \left( \sum_{r \in S} \croom \right)$, cf. \cref{fig:cn2V2}.
    The set $S^*$ defines a feasible assignment for $\mathcal{I}^*$ as it holds
    \begin{align*}
        \sum_{r \in S^*} \croom &= \sum_{r \in S^*} a = a \cdot  \vert S^*\vert = \sum_{r \in S} \croom \geq \nf = \nf^*, \text{ and }\\
        \sum_{r \in \R^* \setminus S^*} \croom &= a \cdot ( \vert \R^* \vert - \vert S^* \vert)  = \sum_{r \in \R \setminus S} \croom \geq \nm = \nm^*.
    \end{align*}
    \begin{figure}[h]
\centering
	\begin{minipage}[t]{.4 \linewidth}
		\scalebox{.95}{
        \centering
		\begin{tikzpicture}[scale=.6]	
			
			\node[blacksquare, minimum width=.5cm, minimum height=1cm] at (0,2) {};
			\node[greenball] at (0,2.4) {};
			\node[greenball] at (0,1.6) {};
			
			\node[blacksquare, minimum width=1cm, minimum height=1cm] at (1.9,2) {};
			\node[redball] at (1.5,2.4) {};
			\node[redball] at (2.3,1.6) {};
			\node[redball] at (1.5,1.6) {};
			\node[redball] at (2.3,2.4) {};
			
			\node[blacksquare, minimum width=1cm, minimum height=1cm] at (4.3,2) {};
			\foreach \x in {1,0} {
				\node[redball] at (\x*.8+3.9,2.4) {};
				\node[redball] at (\x*.8+3.9,1.6) {};
			}
            \node[grayball] at (.8+3.9,1.6) {};
			
			\node[blacksquare, minimum width=1.5cm, minimum height=1cm] at (7.1,2) {};
			\foreach \x in {1,2,0} {
				\node[greenball] at (\x*.8+6.3,2.4) {};
				\node[greenball] at (\x*.8+6.3,1.6) {};
			}
			
			\node[blacksquare, minimum width=.5cm, minimum height=1cm, draw=none] at (6.2+2,2) {};

		\end{tikzpicture}}
		\subcaption{original instance $\mathcal{I}$}
	\end{minipage}
\hfill
	\begin{minipage}[t]{.05\linewidth}
    \centering
		\centering
		\begin{tikzpicture}[scale=.6]
		\draw[edge, -> ] (12,.8)--(13.5,.8) {};
		
		\node[blacksquare, minimum width=.5cm, minimum height=1cm, draw=none] at (13,.8) {};
		\end{tikzpicture}
	\end{minipage}
\hfill
    \begin{minipage}[t]{.45 \linewidth}
		\centering
		\scalebox{.95}{
		\begin{tikzpicture}[scale=.6]	
		
			\node[blacksquare, minimum width=.5cm, minimum height=1cm] at (0,2) {};
			\node[greenball] at (0,2.4) {};
			\node[greenball] at (0,1.6) {};

			\node[blacksquare, minimum width=.5cm, minimum height=1cm] at (1.4,2) {};
			\node[blacksquare, minimum width=.5cm, minimum height=1cm] at (2.4,2) {};
			
			\node[redball] at (1.4,2.4) {};
			\node[redball] at (2.4,1.6) {};
			\node[redball] at (1.4,1.6) {};
			\node[redball] at (2.4,2.4) {};
			
			\node[blacksquare, minimum width=.5cm, minimum height=1cm] at (4.8,2) {};
			\node[blacksquare, minimum width=.5cm, minimum height=1cm] at (3.8,2) {};
					
			\foreach \x in {1,0} {
				\node[redball] at (\x+3.8,2.4) {};
				\node[redball] at (\x+3.8,1.6) {};
			}
            \node[grayball] at (1+3.8,1.6) {};

			\foreach \x in {1,0,2} {
				\node[blacksquare, minimum width=.5cm, minimum height=1cm] at (6.2+\x,2) {};
				\node[greenball] at (\x+6.2,2.4) {};
				\node[greenball] at (\x+6.2,1.6) {};
			}

		\end{tikzpicture}}
		\subcaption{auxiliary instance $\mathcal{I}^*$ }
	\end{minipage}
	
	\caption{Construction of $S^*$ in case $\croom \in \{a, 2a, \dots, na\}$ for $r \in \R$}
    \label{fig:cn2V2}
\end{figure}	
	
    
    Let $S^* \subseteq \R^*$ be a feasible assignment for $\mathcal{I}^*$. 
    First, we assume w.l.o.g. that female and male patients are assigned to rooms as compact as possible, meaning that $\lceil \frac{\nf^*}{a} \rceil$ and $\lceil \frac{\nm^*}{a} \rceil$ rooms are assigned to female and male patients.
    In order to derive a feasible assignment for $\mathcal{I}$, we need to reverse the splitting of rooms, meaning we must combine rooms until the original room sizes in $\R$ are restored. However, to guarantee feasibility, only rooms containing patients of the same gender can be combined. This problem can be formulated as a new instance $\mathcal{I}' = (\nf', \nm', \R', c')$ of the feasibility problem:
    We define $\nf' = \ceil{\frac{\nf^*}{a}} = \ceil{\frac{\nf}{a}}$ and $\nm' = \ceil{\frac{\nm^*}{a}} = \ceil{\frac{\nm}{a}}$. For every room $r \in \R$, a room $r'$ of capacity $c_{r'}= \frac{\croom}{a}$ is added to $\R'$. We obtain $\vert \R' \vert = \vert \R \vert$ and $c_{r'} = \frac{\croom}{a} \in \N $ for all $ r \in \R.$
    A feasible assignment of $\mathcal{I}'$  corresponds to a feasible assignment of $\mathcal{I}$ and vice versa, cf. \cref{fig:cn3V2}:
    Given a feasible assignment $S' \subseteq \R'$, we define $S= S'$ and obtain 
    \begin{align*}
        \sum_{r \in S} \croom = a \sum_{r \in S'} \croom' \geq a \nf' \geq \nf \ \text{ and } \sum_{ r \in \R \setminus S} \croom = a\sum_{r \in \R' \setminus S'}  \croom' \geq a \nm' \geq \nm.
    \end{align*}
    Similarly, starting from a feasible assignment $\mathcal{I}$, we can construct a feasible assignment for $\mathcal{I}'$ since it holds
    \begin{align*}
        \sum_{r \in S'} \croom' = \sum_{r \in S} \frac{1}{a} \croom \geq \ceil*{\frac{\nf}{a}} = \nf' \ \text{ and } \sum_{r \in \R' \setminus S'} \croom' = \sum_{ r \in \R \setminus S} \frac{1}{a} \croom \geq \ceil*{\frac{\nm}{a}} = \nm',
    \end{align*}
    whereby we used the integrality of $\sum_{r \in S} \frac{1}{a} \croom$ and $\sum_{r \in \R \setminus S} \frac{1}{a} \croom$. \\

    \begin{figure}[h]
	\begin{minipage}[t]{.43 \linewidth}
		\centering
		\scalebox{.95}{
		\begin{tikzpicture}[scale=.6]	
		
			\node[blacksquare, minimum width=.5cm, minimum height=1cm] at (0,2) {};
			\node[greenball] at (0,2.4) {};
			\node[greenball] at (0,1.6) {};

			\node[blacksquare, minimum width=.5cm, minimum height=1cm] at (1.4,2) {};
			\node[blacksquare, minimum width=.5cm, minimum height=1cm] at (2.4,2) {};
			
            \foreach \x in {1,0} {
				\node[greenball] at (\x+1.4,2.4) {};
				\node[greenball] at (\x+1.4,1.6) {};
			}
			
			\node[blacksquare, minimum width=.5cm, minimum height=1cm] at (4.8,2) {};
			\node[blacksquare, minimum width=.5cm, minimum height=1cm] at (3.8,2) {};
					
            \node[greenball] at (3.8,2.4) {};
            \node[greenball] at (3.8,1.6) {};
            \node[redball] at (4.8,2.4) {};
            \node[redball] at (4.8,1.6) {};

			\foreach \x in {1,0,2} {
				\node[blacksquare, minimum width=.5cm, minimum height=1cm] at (6.2+\x,2) {};
				\node[redball] at (\x+6.2,2.4) {};
				\node[redball] at (\x+6.2,1.6) {};
			}
            \node[grayball] at (2+6.2,1.6) {};

		\end{tikzpicture}}
		\subcaption*{feasible assignment of $\mathcal{I}^*$}
	\end{minipage}
\hfill
	\begin{minipage}[t]{.05\linewidth}
		\centering
		\begin{tikzpicture}[scale=.6]
		\draw[edge, -> ] (12,.8)--(13.5,.8) {};
		
		\node[blacksquare, minimum width=.5cm, minimum height=1cm, draw=none] at (13,.8) {};
		\end{tikzpicture}
	\end{minipage}
\hfill
	\begin{minipage}[t]{.44 \linewidth}
		\scalebox{.95}{
		\begin{tikzpicture}[scale=.6]	
			
			\node[blacksquare, minimum width=.5cm, minimum height=1cm] at (0,2) {};
			\node[grayball] at (0,2.4) {};
			\node[grayball] at (0,1.6) {};
			
			\node[blacksquare, minimum width=1cm, minimum height=1cm] at (1.9,2) {};
			\node[grayball] at (1.5,2.4) {};
			\node[grayball] at (2.3,1.6) {};
			\node[grayball] at (1.5,1.6) {};
			\node[grayball] at (2.3,2.4) {};
			
			\node[blacksquare, minimum width=1cm, minimum height=1cm] at (4.3,2) {};
			\foreach \x in {1,0} {
				\node[grayball] at (\x*.8+3.9,2.4) {};
				\node[grayball] at (\x*.8+3.9,1.6) {};
			}
			
			\node[blacksquare, minimum width=1.5cm, minimum height=1cm] at (7.1,2) {};
			\foreach \x in {1,2,0} {
				\node[grayball] at (\x*.8+6.3,2.4) {};
				\node[grayball] at (\x*.8+6.3,1.6) {};
			}
			
			\node[blacksquare, minimum width=.5cm, minimum height=1cm, draw=none] at (6.2+2,2) {};

		\end{tikzpicture}}
		\subcaption*{rooms $\R$ }
	\end{minipage}

	\begin{minipage}[t]{.43 \linewidth}
		\centering
		\scalebox{.95}{
		\begin{tikzpicture}[scale=.6]	
		
			\node[greenball] at (0,1.6) {};
            
			\node[greenball] at (2.4,1.6) {};
			\node[greenball] at (1.4,1.6) {};
				
			\node[greenball] at (3.8,1.6) {};
            \node[redball] at (1+3.8,1.6) {};

			\foreach \x in {1,0,2} {
				\node[redball] at (\x+6.2,1.6) {};
			}

            \node[blacksquare, minimum width=.5cm, minimum height=.5cm, draw=none] at (5,1.6) {};

            \draw[edge, <->]  (4.1,2.5)-- (4.1,4) {};
		\end{tikzpicture}}
		\subcaption*{female and male patients $\nf'$ and $\nm'$}
	\end{minipage}
	\hfill
	\begin{minipage}[t]{.05\linewidth}
		\centering
		\begin{tikzpicture}[scale=.6]
        \node[blacksquare, minimum width=.5cm, minimum height=.5cm, draw=none] at (12.5,1.6) {};
		\draw[edge, -> ] (12,1.6)--(13.5,1.6) {};
		\end{tikzpicture}
	\end{minipage}
	\hfill
	\begin{minipage}[t]{.455 \linewidth}	
		\centering
		\scalebox{.95}{
		\begin{tikzpicture}[scale=.6]	
			
			\node[blacksquare, minimum width=.5cm, minimum height=.5cm] at (0,1.6) {};
			\node[grayball] at (0,1.6) {};
			
			\node[blacksquare, minimum width=1cm, minimum height=.5cm] at (1.9,1.6) {};
			\node[grayball] at (2.3,1.6) {};
			\node[grayball] at (1.5,1.6) {};
			
			\node[blacksquare, minimum width=1cm, minimum height=.5cm] at (4.3,1.6) {};
			\foreach \x in {1,0} {
				\node[grayball] at (\x*.8+3.9,1.6) {};
			}
			
			\node[blacksquare, minimum width=1.5cm, minimum height=.5cm] at (7.1,1.6) {};
			\foreach \x in {1,2,0} {
				\node[grayball] at (\x*.8+6.3,1.6) {};
			}

            \draw[edge, <->]  (3.6,2.5)-- (3.6,4) {};
		\end{tikzpicture}}
       \subcaption*{rooms $\R'$}
	\end{minipage}
	\caption{Construction of the instance $\mathcal{I}'$}
    \label{fig:cn3V2}
\end{figure}	
	
    
    Consequently, we show that if $S^* \subseteq \R^*$ is a feasible assignment for $\mathcal{I}^*$, we can derive a feasible assignment for $\mathcal{I}'$ which then implies the feasibility of $\mathcal{I}$.
    To this end, we need to show that instance $\mathcal{I}'$ is feasible if and only if \cref{eq:FeasibilityLemma29} holds: 
    In case (i), instance $\mathcal{I}'$ has rooms of size $\{1, n \}$ and by construction $R_1 ' = R_a \geq n-1$ as well as $R_n' = R_{na}$. Consequently, we can apply \cref{eq:two_room_sizes} and $\mathcal{I}'$ is feasible if and only if $\nf' + \nm' \leq \sum_{r \in \R'} \croom'$ which is equivalent to \cref{eq:FeasibilityLemma29}.
    In case (ii), it holds $\croom' \in \{ 1, \dots, n \}$ for $r \in \R'$, and $R_i' = R_{ia} >0$ for $i = 1, \dots, n$. 
    And in case (iii), we get $\croom' \in \{b^0, b^1, \dots, b^n \} $ for $r \in \R'$, and $R_{b^i}' = R_{b^ia} \geq b-1$ for $i = 0, \dots, n$. Therefore, \cref{Lemma28} concludes in both cases that the instance $\mathcal{I}'$ is feasible if and only if \cref{eq:FeasibilityLemma29} is fulfilled. 
    Since \cref{eq:FeasibilityLemma29} is fulfilled if and only if $\mathcal{I}^*$ is feasible, it follows that $\mathcal{I}'$ and thus $\mathcal{I}$ are feasible.
\end{proof}

The new polynomial cases derived for the \ac{pra} can also be transferred to its related combinatorial problems. 
Moreover, the results indicate that for load factors $\ell \leq 1$, feasibility depends on the structural properties of the room configuration.
Room configurations that allow for more flexible partitions into male and female rooms can accommodate a wider range of gender compositions, while others are naturally more restrictive.
To assess the practical relevance of explicitly enforcing feasibility during the instance generation, we conduct a computational study showing that, despite this structural flexibility, instances still have infeasible days when the load factor becomes sufficiently high.


All instances are generated by varying the parameters room configuration, planning horizon, rate of female patients, and load factor. 
The full set of parameters and their values is summarized in \cref{tab:all_parameters}. 
Room configuration corresponds to the four bed configuration scenarios, which differ in the distribution of room sizes but have 30 beds each. The room configurations are chosen in such way that feasibility is explicitly not ensured as long as the number of patients does not exceed the number of beds. All instances have a load factor smaller or equal to one, ensuring that infeasibility arises solely from gender-based room restrictions.
For each parameter combination, 20 independent instances are generated without enforcing feasibility during instance generation. 
Feasibility is then assessed on a day-by-day basis using the proposed dynamic programming approach that checks whether the patients can be assigned to the rooms.

\begin{table}[ht]
\centering
\caption{Experimental parameters and their values considered in the computational study.}
\label{tab:all_parameters}
\begin{tabular}{@{}l l@{}}
\toprule
Parameter & Values \\
\midrule
Room configuration & Scenario 1: 10$\times$2-bed, 1$\times$4-bed, 1$\times$6-bed \\
                   & Scenario 2: 10$\times$3-bed \\
                   & Scenario 3: 2$\times$1-bed, 7$\times$4-bed \\
                   & Scenario 4: 6$\times$3-bed, 3$\times$4-bed \\
Planning horizon       & 30, 60 days \\
Rate of female patients & 0.1, 0.2, 0.3, 0.4, 0.5 \\
Load factor        & 0.90, 0.95, 0.98, 1.00 \\
\bottomrule
\end{tabular}
\end{table}

\cref{fig:boxplot_load_factor_planning_horizon} presents the proportion of infeasible days over the planning horizon for all parameter settings. 
A load factor of $\loadf \leq 90\%$ always yields feasible instances. For load factors approaching 100\%, however, the proportion of infeasible days increases sharply, reaching on average 50\% at full capacity. The results indicate that the proportion of infeasible days is primarily driven by the load factor.
We further observe that the length of the planning horizon has no impact on the daily infeasibility probability. However, longer time horizons naturally increase the probability of observing at least one infeasible day, making it more likely that the instance as a whole is infeasible.

\begin{figure}[h!]
    \centering
    \includegraphics[width=\linewidth]{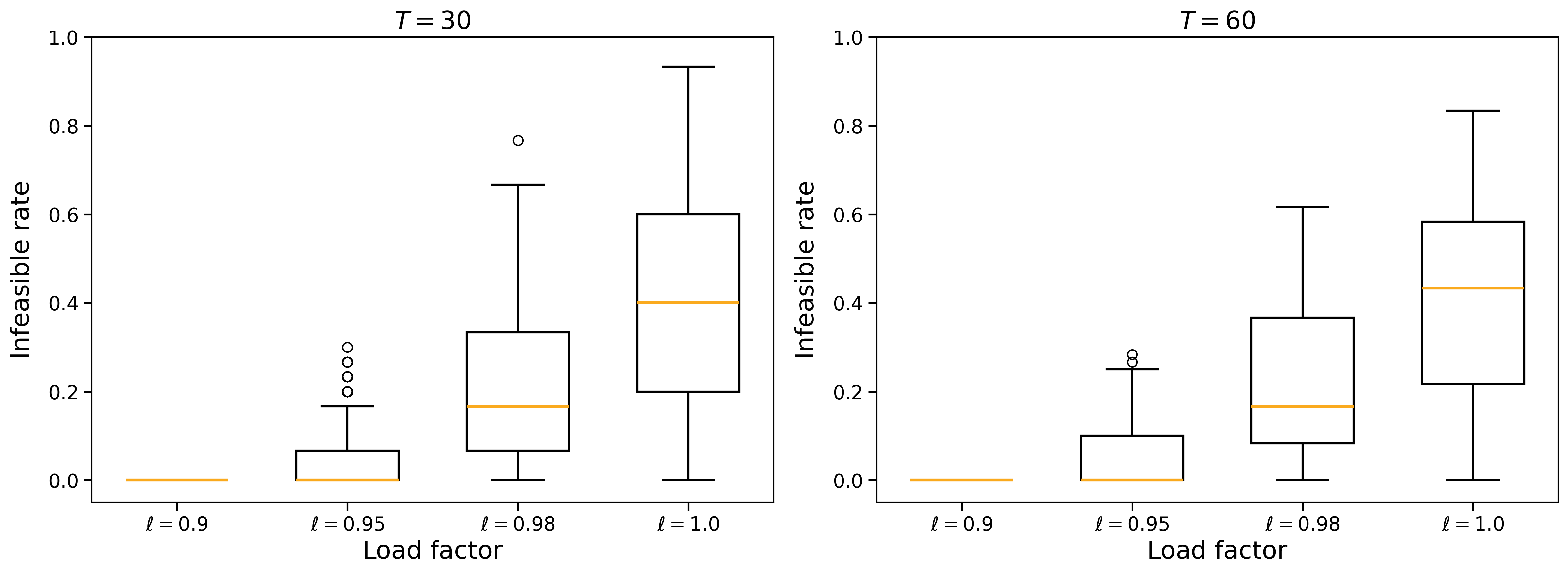}
    \caption{Proportion of infeasible days per planning horizon and per load factor.}
    \label{fig:boxplot_load_factor_planning_horizon}
\end{figure}


\section{Conclusion and future work}\label{sec:future}
Motivated by the need for realistic data in the evaluation of algorithms for patient-related problems and the lack thereof, we propose a new instance generator for the \ac{pra}. It is made publicly available and can be readily used to generate realistic and customizable instances.
Our approach is based on an extensive data analysis of a large real-life hospital dataset and provides distributions for key patient attributes, including age, length of stay, and length of register, as well as age-dependent characteristics. These distributions are made accessible through a user-friendly graphical interface that allows users to either select recommended configurations representing typical ward types or customize parameters to mimic specific hospital settings.

Beyond realism and flexibility, a central contribution of this generator is the optional enforcement of feasibility with respect to gender-separated rooms. In several countries, including Germany and the United Kingdom, shared hospital rooms are generally required to accommodate patients of the same sex. This can substantially affect the feasibility of patient-to-room assignments, particularly for long planning horizons and high load factors, as demonstrated by our computational study.
Building on existing complexity results for the \ac{pra} problem, we utilized the dynamic programming approach for the \acl{bkp} to check feasibility in pseudopolynomial time for arbitrary instances. This method is integrated into the generator, allowing users to explicitly enforce feasibility during instance generation when required. In addition, we identify further special cases for which the feasibility problem is solvable in polynomial time. 

While the generator is designed for the \ac{pra}, the generated instances can already be used as realistic input data for a broader class of patient-related planning problems in hospitals, potentially requiring minor problem-specific extensions.
Future work aims to integrate such extensions directly into the generator. Planned enhancements include incorporating temporal variations in patient arrivals, such as weekly patterns and seasonal effects. Further extensions comprise the simultaneous generation of multiple wards with shared pools of emergency patients and the inclusion of additional patient-related attributes, such as workload contributions for nurse-to-patient assignment.

\printbibliography

\begin{longtable}{llrrrr}
\caption{Computational Study Results Overview} \label{tab:computational_study_results_overview} \\
\toprule
idx & scenario & $\T$ & $p_{{\textnormal{{female}}}}$ & $\loadf$ & $\mu_{\text{infeas}} [d]$ \\
\midrule
\endfirsthead
\caption[]{Computational Study Results Overview} \\
\toprule
idx & scenario & $\T$ & $p_{{\textnormal{{female}}}}$ & $\loadf$ & $\mu_{\text{infeas}} [d]$ \\
\midrule
\endhead
\midrule
\multicolumn{6}{r}{Continued on next page} \\
\midrule
\endfoot
\bottomrule
\endlastfoot
0-19 & Scenario 1 (10x2, 1x4, 1x6) & 30 & 0.10 & 0.90 & 0.00 \\
20-39 & Scenario 1 (10x2, 1x4, 1x6) & 30 & 0.10 & 0.95 & 0.00 \\
\rowcolor[HTML]{f0f0f0} 40-59 & Scenario 1 (10x2, 1x4, 1x6) & 30 & 0.10 & 0.98 & 5.40 \\
\rowcolor[HTML]{f0f0f0} 60-79 & Scenario 1 (10x2, 1x4, 1x6) & 30 & 0.10 & 1.00 & 13.50 \\
80-99 & Scenario 1 (10x2, 1x4, 1x6) & 30 & 0.20 & 0.90 & 0.00 \\
100-119 & Scenario 1 (10x2, 1x4, 1x6) & 30 & 0.20 & 0.95 & 0.00 \\
\rowcolor[HTML]{f0f0f0} 120-139 & Scenario 1 (10x2, 1x4, 1x6) & 30 & 0.20 & 0.98 & 5.90 \\
\rowcolor[HTML]{f0f0f0} 140-159 & Scenario 1 (10x2, 1x4, 1x6) & 30 & 0.20 & 1.00 & 14.95 \\
160-179 & Scenario 1 (10x2, 1x4, 1x6) & 30 & 0.30 & 0.90 & 0.00 \\
180-199 & Scenario 1 (10x2, 1x4, 1x6) & 30 & 0.30 & 0.95 & 0.00 \\
\rowcolor[HTML]{f0f0f0} 200-219 & Scenario 1 (10x2, 1x4, 1x6) & 30 & 0.30 & 0.98 & 6.10 \\
\rowcolor[HTML]{f0f0f0} 220-239 & Scenario 1 (10x2, 1x4, 1x6) & 30 & 0.30 & 1.00 & 14.75 \\
240-259 & Scenario 1 (10x2, 1x4, 1x6) & 30 & 0.40 & 0.90 & 0.00 \\
260-279 & Scenario 1 (10x2, 1x4, 1x6) & 30 & 0.40 & 0.95 & 0.00 \\
\rowcolor[HTML]{f0f0f0} 280-299 & Scenario 1 (10x2, 1x4, 1x6) & 30 & 0.40 & 0.98 & 6.25 \\
\rowcolor[HTML]{f0f0f0} 300-319 & Scenario 1 (10x2, 1x4, 1x6) & 30 & 0.40 & 1.00 & 14.90 \\
320-339 & Scenario 1 (10x2, 1x4, 1x6) & 30 & 0.50 & 0.90 & 0.00 \\
340-359 & Scenario 1 (10x2, 1x4, 1x6) & 30 & 0.50 & 0.95 & 0.00 \\
\rowcolor[HTML]{f0f0f0} 360-379 & Scenario 1 (10x2, 1x4, 1x6) & 30 & 0.50 & 0.98 & 6.05 \\
\rowcolor[HTML]{f0f0f0} 380-399 & Scenario 1 (10x2, 1x4, 1x6) & 30 & 0.50 & 1.00 & 15.00 \\
400-419 & Scenario 1 (10x2, 1x4, 1x6) & 60 & 0.10 & 0.90 & 0.00 \\
420-439 & Scenario 1 (10x2, 1x4, 1x6) & 60 & 0.10 & 0.95 & 0.00 \\
\rowcolor[HTML]{f0f0f0} 440-459 & Scenario 1 (10x2, 1x4, 1x6) & 60 & 0.10 & 0.98 & 12.20 \\
\rowcolor[HTML]{f0f0f0} 460-479 & Scenario 1 (10x2, 1x4, 1x6) & 60 & 0.10 & 1.00 & 30.20 \\
480-499 & Scenario 1 (10x2, 1x4, 1x6) & 60 & 0.20 & 0.90 & 0.00 \\
500-519 & Scenario 1 (10x2, 1x4, 1x6) & 60 & 0.20 & 0.95 & 0.00 \\
\rowcolor[HTML]{f0f0f0} 520-539 & Scenario 1 (10x2, 1x4, 1x6) & 60 & 0.20 & 0.98 & 11.45 \\
\rowcolor[HTML]{f0f0f0} 540-559 & Scenario 1 (10x2, 1x4, 1x6) & 60 & 0.20 & 1.00 & 30.40 \\
560-579 & Scenario 1 (10x2, 1x4, 1x6) & 60 & 0.30 & 0.90 & 0.00 \\
580-599 & Scenario 1 (10x2, 1x4, 1x6) & 60 & 0.30 & 0.95 & 0.00 \\
\rowcolor[HTML]{f0f0f0} 600-619 & Scenario 1 (10x2, 1x4, 1x6) & 60 & 0.30 & 0.98 & 11.50 \\
\rowcolor[HTML]{f0f0f0} 620-639 & Scenario 1 (10x2, 1x4, 1x6) & 60 & 0.30 & 1.00 & 29.15 \\
640-659 & Scenario 1 (10x2, 1x4, 1x6) & 60 & 0.40 & 0.90 & 0.00 \\
660-679 & Scenario 1 (10x2, 1x4, 1x6) & 60 & 0.40 & 0.95 & 0.00 \\
\rowcolor[HTML]{f0f0f0} 680-699 & Scenario 1 (10x2, 1x4, 1x6) & 60 & 0.40 & 0.98 & 12.15 \\
\rowcolor[HTML]{f0f0f0} 700-719 & Scenario 1 (10x2, 1x4, 1x6) & 60 & 0.40 & 1.00 & 28.30 \\
720-739 & Scenario 1 (10x2, 1x4, 1x6) & 60 & 0.50 & 0.90 & 0.00 \\
740-759 & Scenario 1 (10x2, 1x4, 1x6) & 60 & 0.50 & 0.95 & 0.00 \\
\rowcolor[HTML]{f0f0f0} 760-779 & Scenario 1 (10x2, 1x4, 1x6) & 60 & 0.50 & 0.98 & 11.45 \\
\rowcolor[HTML]{f0f0f0} 780-799 & Scenario 1 (10x2, 1x4, 1x6) & 60 & 0.50 & 1.00 & 30.60 \\
800-819 & Scenario 2 (10x3) & 30 & 0.10 & 0.90 & 0.00 \\
\rowcolor[HTML]{f0f0f0} 820-839 & Scenario 2 (10x3) & 30 & 0.10 & 0.95 & 5.50 \\
\rowcolor[HTML]{f0f0f0} 840-859 & Scenario 2 (10x3) & 30 & 0.10 & 0.98 & 14.70 \\
\rowcolor[HTML]{f0f0f0} 860-879 & Scenario 2 (10x3) & 30 & 0.10 & 1.00 & 21.20 \\
880-899 & Scenario 2 (10x3) & 30 & 0.20 & 0.90 & 0.00 \\
\rowcolor[HTML]{f0f0f0} 900-919 & Scenario 2 (10x3) & 30 & 0.20 & 0.95 & 5.20 \\
\rowcolor[HTML]{f0f0f0} 920-939 & Scenario 2 (10x3) & 30 & 0.20 & 0.98 & 13.40 \\
\rowcolor[HTML]{f0f0f0} 940-959 & Scenario 2 (10x3) & 30 & 0.20 & 1.00 & 20.65 \\
960-979 & Scenario 2 (10x3) & 30 & 0.30 & 0.90 & 0.00 \\
\rowcolor[HTML]{f0f0f0} 980-999 & Scenario 2 (10x3) & 30 & 0.30 & 0.95 & 4.45 \\
\rowcolor[HTML]{f0f0f0} 1000-1019 & Scenario 2 (10x3) & 30 & 0.30 & 0.98 & 13.40 \\
\rowcolor[HTML]{f0f0f0} 1020-1039 & Scenario 2 (10x3) & 30 & 0.30 & 1.00 & 19.70 \\
1040-1059 & Scenario 2 (10x3) & 30 & 0.40 & 0.90 & 0.00 \\
\rowcolor[HTML]{f0f0f0} 1060-1079 & Scenario 2 (10x3) & 30 & 0.40 & 0.95 & 4.80 \\
\rowcolor[HTML]{f0f0f0} 1080-1099 & Scenario 2 (10x3) & 30 & 0.40 & 0.98 & 13.90 \\
\rowcolor[HTML]{f0f0f0} 1100-1119 & Scenario 2 (10x3) & 30 & 0.40 & 1.00 & 19.05 \\
1120-1139 & Scenario 2 (10x3) & 30 & 0.50 & 0.90 & 0.00 \\
\rowcolor[HTML]{f0f0f0} 1140-1159 & Scenario 2 (10x3) & 30 & 0.50 & 0.95 & 5.25 \\
\rowcolor[HTML]{f0f0f0} 1160-1179 & Scenario 2 (10x3) & 30 & 0.50 & 0.98 & 13.60 \\
\rowcolor[HTML]{f0f0f0} 1180-1199 & Scenario 2 (10x3) & 30 & 0.50 & 1.00 & 19.40 \\
1200-1219 & Scenario 2 (10x3) & 60 & 0.10 & 0.90 & 0.00 \\
\rowcolor[HTML]{f0f0f0} 1220-1239 & Scenario 2 (10x3) & 60 & 0.10 & 0.95 & 10.30 \\
\rowcolor[HTML]{f0f0f0} 1240-1259 & Scenario 2 (10x3) & 60 & 0.10 & 0.98 & 28.05 \\
\rowcolor[HTML]{f0f0f0} 1260-1279 & Scenario 2 (10x3) & 60 & 0.10 & 1.00 & 39.70 \\
1280-1299 & Scenario 2 (10x3) & 60 & 0.20 & 0.90 & 0.00 \\
\rowcolor[HTML]{f0f0f0} 1300-1319 & Scenario 2 (10x3) & 60 & 0.20 & 0.95 & 9.65 \\
\rowcolor[HTML]{f0f0f0} 1320-1339 & Scenario 2 (10x3) & 60 & 0.20 & 0.98 & 28.20 \\
\rowcolor[HTML]{f0f0f0} 1340-1359 & Scenario 2 (10x3) & 60 & 0.20 & 1.00 & 39.50 \\
1360-1379 & Scenario 2 (10x3) & 60 & 0.30 & 0.90 & 0.00 \\
\rowcolor[HTML]{f0f0f0} 1380-1399 & Scenario 2 (10x3) & 60 & 0.30 & 0.95 & 9.55 \\
\rowcolor[HTML]{f0f0f0} 1400-1419 & Scenario 2 (10x3) & 60 & 0.30 & 0.98 & 28.45 \\
\rowcolor[HTML]{f0f0f0} 1420-1439 & Scenario 2 (10x3) & 60 & 0.30 & 1.00 & 39.75 \\
1440-1459 & Scenario 2 (10x3) & 60 & 0.40 & 0.90 & 0.00 \\
\rowcolor[HTML]{f0f0f0} 1460-1479 & Scenario 2 (10x3) & 60 & 0.40 & 0.95 & 11.10 \\
\rowcolor[HTML]{f0f0f0} 1480-1499 & Scenario 2 (10x3) & 60 & 0.40 & 0.98 & 28.25 \\
\rowcolor[HTML]{f0f0f0} 1500-1519 & Scenario 2 (10x3) & 60 & 0.40 & 1.00 & 39.45 \\
1520-1539 & Scenario 2 (10x3) & 60 & 0.50 & 0.90 & 0.00 \\
\rowcolor[HTML]{f0f0f0} 1540-1559 & Scenario 2 (10x3) & 60 & 0.50 & 0.95 & 10.65 \\
\rowcolor[HTML]{f0f0f0} 1560-1579 & Scenario 2 (10x3) & 60 & 0.50 & 0.98 & 27.65 \\
\rowcolor[HTML]{f0f0f0} 1580-1599 & Scenario 2 (10x3) & 60 & 0.50 & 1.00 & 40.85 \\
1600-1619 & Scenario 3 (2x1, 7x4) & 30 & 0.10 & 0.90 & 0.00 \\
1620-1639 & Scenario 3 (2x1, 7x4) & 30 & 0.10 & 0.95 & 0.00 \\
\rowcolor[HTML]{f0f0f0} 1640-1659 & Scenario 3 (2x1, 7x4) & 30 & 0.10 & 0.98 & 3.15 \\
\rowcolor[HTML]{f0f0f0} 1660-1679 & Scenario 3 (2x1, 7x4) & 30 & 0.10 & 1.00 & 7.70 \\
1680-1699 & Scenario 3 (2x1, 7x4) & 30 & 0.20 & 0.90 & 0.00 \\
1700-1719 & Scenario 3 (2x1, 7x4) & 30 & 0.20 & 0.95 & 0.00 \\
\rowcolor[HTML]{f0f0f0} 1720-1739 & Scenario 3 (2x1, 7x4) & 30 & 0.20 & 0.98 & 2.85 \\
\rowcolor[HTML]{f0f0f0} 1740-1759 & Scenario 3 (2x1, 7x4) & 30 & 0.20 & 1.00 & 8.05 \\
1760-1779 & Scenario 3 (2x1, 7x4) & 30 & 0.30 & 0.90 & 0.00 \\
1780-1799 & Scenario 3 (2x1, 7x4) & 30 & 0.30 & 0.95 & 0.00 \\
\rowcolor[HTML]{f0f0f0} 1800-1819 & Scenario 3 (2x1, 7x4) & 30 & 0.30 & 0.98 & 2.85 \\
\rowcolor[HTML]{f0f0f0} 1820-1839 & Scenario 3 (2x1, 7x4) & 30 & 0.30 & 1.00 & 7.40 \\
1840-1859 & Scenario 3 (2x1, 7x4) & 30 & 0.40 & 0.90 & 0.00 \\
1860-1879 & Scenario 3 (2x1, 7x4) & 30 & 0.40 & 0.95 & 0.00 \\
\rowcolor[HTML]{f0f0f0} 1880-1899 & Scenario 3 (2x1, 7x4) & 30 & 0.40 & 0.98 & 3.10 \\
\rowcolor[HTML]{f0f0f0} 1900-1919 & Scenario 3 (2x1, 7x4) & 30 & 0.40 & 1.00 & 8.10 \\
1920-1939 & Scenario 3 (2x1, 7x4) & 30 & 0.50 & 0.90 & 0.00 \\
1940-1959 & Scenario 3 (2x1, 7x4) & 30 & 0.50 & 0.95 & 0.00 \\
\rowcolor[HTML]{f0f0f0} 1960-1979 & Scenario 3 (2x1, 7x4) & 30 & 0.50 & 0.98 & 2.60 \\
\rowcolor[HTML]{f0f0f0} 1980-1999 & Scenario 3 (2x1, 7x4) & 30 & 0.50 & 1.00 & 6.35 \\
2000-2019 & Scenario 3 (2x1, 7x4) & 60 & 0.10 & 0.90 & 0.00 \\
2020-2039 & Scenario 3 (2x1, 7x4) & 60 & 0.10 & 0.95 & 0.00 \\
\rowcolor[HTML]{f0f0f0} 2040-2059 & Scenario 3 (2x1, 7x4) & 60 & 0.10 & 0.98 & 5.70 \\
\rowcolor[HTML]{f0f0f0} 2060-2079 & Scenario 3 (2x1, 7x4) & 60 & 0.10 & 1.00 & 13.25 \\
2080-2099 & Scenario 3 (2x1, 7x4) & 60 & 0.20 & 0.90 & 0.00 \\
2100-2119 & Scenario 3 (2x1, 7x4) & 60 & 0.20 & 0.95 & 0.00 \\
\rowcolor[HTML]{f0f0f0} 2120-2139 & Scenario 3 (2x1, 7x4) & 60 & 0.20 & 0.98 & 4.90 \\
\rowcolor[HTML]{f0f0f0} 2140-2159 & Scenario 3 (2x1, 7x4) & 60 & 0.20 & 1.00 & 15.95 \\
2160-2179 & Scenario 3 (2x1, 7x4) & 60 & 0.30 & 0.90 & 0.00 \\
2180-2199 & Scenario 3 (2x1, 7x4) & 60 & 0.30 & 0.95 & 0.00 \\
\rowcolor[HTML]{f0f0f0} 2200-2219 & Scenario 3 (2x1, 7x4) & 60 & 0.30 & 0.98 & 6.40 \\
\rowcolor[HTML]{f0f0f0} 2220-2239 & Scenario 3 (2x1, 7x4) & 60 & 0.30 & 1.00 & 14.60 \\
2240-2259 & Scenario 3 (2x1, 7x4) & 60 & 0.40 & 0.90 & 0.00 \\
2260-2279 & Scenario 3 (2x1, 7x4) & 60 & 0.40 & 0.95 & 0.00 \\
\rowcolor[HTML]{f0f0f0} 2280-2299 & Scenario 3 (2x1, 7x4) & 60 & 0.40 & 0.98 & 5.75 \\
\rowcolor[HTML]{f0f0f0} 2300-2319 & Scenario 3 (2x1, 7x4) & 60 & 0.40 & 1.00 & 14.80 \\
2320-2339 & Scenario 3 (2x1, 7x4) & 60 & 0.50 & 0.90 & 0.00 \\
2340-2359 & Scenario 3 (2x1, 7x4) & 60 & 0.50 & 0.95 & 0.00 \\
\rowcolor[HTML]{f0f0f0} 2360-2379 & Scenario 3 (2x1, 7x4) & 60 & 0.50 & 0.98 & 6.65 \\
\rowcolor[HTML]{f0f0f0} 2380-2399 & Scenario 3 (2x1, 7x4) & 60 & 0.50 & 1.00 & 15.65 \\
2400-2419 & Scenario 4 (6x3, 3x4) & 30 & 0.10 & 0.90 & 0.00 \\
\rowcolor[HTML]{f0f0f0} 2420-2439 & Scenario 4 (6x3, 3x4) & 30 & 0.10 & 0.95 & 1.80 \\
\rowcolor[HTML]{f0f0f0} 2440-2459 & Scenario 4 (6x3, 3x4) & 30 & 0.10 & 0.98 & 9.75 \\
\rowcolor[HTML]{f0f0f0} 2460-2479 & Scenario 4 (6x3, 3x4) & 30 & 0.10 & 1.00 & 14.60 \\
2480-2499 & Scenario 4 (6x3, 3x4) & 30 & 0.20 & 0.90 & 0.00 \\
\rowcolor[HTML]{f0f0f0} 2500-2519 & Scenario 4 (6x3, 3x4) & 30 & 0.20 & 0.95 & 0.20 \\
\rowcolor[HTML]{f0f0f0} 2520-2539 & Scenario 4 (6x3, 3x4) & 30 & 0.20 & 0.98 & 3.15 \\
\rowcolor[HTML]{f0f0f0} 2540-2559 & Scenario 4 (6x3, 3x4) & 30 & 0.20 & 1.00 & 6.85 \\
2560-2579 & Scenario 4 (6x3, 3x4) & 30 & 0.30 & 0.90 & 0.00 \\
2580-2599 & Scenario 4 (6x3, 3x4) & 30 & 0.30 & 0.95 & 0.00 \\
\rowcolor[HTML]{f0f0f0} 2600-2619 & Scenario 4 (6x3, 3x4) & 30 & 0.30 & 0.98 & 0.35 \\
\rowcolor[HTML]{f0f0f0} 2620-2639 & Scenario 4 (6x3, 3x4) & 30 & 0.30 & 1.00 & 1.65 \\
2640-2659 & Scenario 4 (6x3, 3x4) & 30 & 0.40 & 0.90 & 0.00 \\
2660-2679 & Scenario 4 (6x3, 3x4) & 30 & 0.40 & 0.95 & 0.00 \\
2680-2699 & Scenario 4 (6x3, 3x4) & 30 & 0.40 & 0.98 & 0.00 \\
\rowcolor[HTML]{f0f0f0} 2700-2719 & Scenario 4 (6x3, 3x4) & 30 & 0.40 & 1.00 & 0.20 \\
2720-2739 & Scenario 4 (6x3, 3x4) & 30 & 0.50 & 0.90 & 0.00 \\
2740-2759 & Scenario 4 (6x3, 3x4) & 30 & 0.50 & 0.95 & 0.00 \\
2760-2779 & Scenario 4 (6x3, 3x4) & 30 & 0.50 & 0.98 & 0.00 \\
2780-2799 & Scenario 4 (6x3, 3x4) & 30 & 0.50 & 1.00 & 0.00 \\
2800-2819 & Scenario 4 (6x3, 3x4) & 60 & 0.10 & 0.90 & 0.00 \\
\rowcolor[HTML]{f0f0f0} 2820-2839 & Scenario 4 (6x3, 3x4) & 60 & 0.10 & 0.95 & 5.75 \\
\rowcolor[HTML]{f0f0f0} 2840-2859 & Scenario 4 (6x3, 3x4) & 60 & 0.10 & 0.98 & 16.95 \\
\rowcolor[HTML]{f0f0f0} 2860-2879 & Scenario 4 (6x3, 3x4) & 60 & 0.10 & 1.00 & 28.80 \\
2880-2899 & Scenario 4 (6x3, 3x4) & 60 & 0.20 & 0.90 & 0.00 \\
\rowcolor[HTML]{f0f0f0} 2900-2919 & Scenario 4 (6x3, 3x4) & 60 & 0.20 & 0.95 & 0.50 \\
\rowcolor[HTML]{f0f0f0} 2920-2939 & Scenario 4 (6x3, 3x4) & 60 & 0.20 & 0.98 & 5.20 \\
\rowcolor[HTML]{f0f0f0} 2940-2959 & Scenario 4 (6x3, 3x4) & 60 & 0.20 & 1.00 & 12.75 \\
2960-2979 & Scenario 4 (6x3, 3x4) & 60 & 0.30 & 0.90 & 0.00 \\
\rowcolor[HTML]{f0f0f0} 2980-2999 & Scenario 4 (6x3, 3x4) & 60 & 0.30 & 0.95 & 0.05 \\
\rowcolor[HTML]{f0f0f0} 3000-3019 & Scenario 4 (6x3, 3x4) & 60 & 0.30 & 0.98 & 1.05 \\
\rowcolor[HTML]{f0f0f0} 3020-3039 & Scenario 4 (6x3, 3x4) & 60 & 0.30 & 1.00 & 1.40 \\
3040-3059 & Scenario 4 (6x3, 3x4) & 60 & 0.40 & 0.90 & 0.00 \\
3060-3079 & Scenario 4 (6x3, 3x4) & 60 & 0.40 & 0.95 & 0.00 \\
\rowcolor[HTML]{f0f0f0} 3080-3099 & Scenario 4 (6x3, 3x4) & 60 & 0.40 & 0.98 & 0.25 \\
\rowcolor[HTML]{f0f0f0} 3100-3119 & Scenario 4 (6x3, 3x4) & 60 & 0.40 & 1.00 & 0.25 \\
3120-3139 & Scenario 4 (6x3, 3x4) & 60 & 0.50 & 0.90 & 0.00 \\
3140-3159 & Scenario 4 (6x3, 3x4) & 60 & 0.50 & 0.95 & 0.00 \\
3160-3179 & Scenario 4 (6x3, 3x4) & 60 & 0.50 & 0.98 & 0.00 \\
3180-3199 & Scenario 4 (6x3, 3x4) & 60 & 0.50 & 1.00 & 0.00 \\
\end{longtable}

\end{document}